\documentclass[12pt]{article}
\usepackage[top=27mm, bottom=27mm, left=22mm, right=22mm]{geometry}
\usepackage{lmodern}
\usepackage{amsmath,amssymb,amsthm}
\usepackage{microtype}
\usepackage[hidelinks]{hyperref}
\hypersetup{
  pdftitle={Thresholds and spread in set systems of bounded VC-dimension},
  pdfauthor={Chong Shangguan},
  pdfsubject={Research manuscript},
  pdfkeywords={expectation threshold, VC-dimension, spread measure, robust sunflower}
}

\newtheorem{theorem}{Theorem}[section]
\newtheorem{lemma}[theorem]{Lemma}
\newtheorem{corollary}[theorem]{Corollary}
\newtheorem{proposition}[theorem]{Proposition}
\newtheorem{claim}[theorem]{Claim}
\theoremstyle{remark}

\newcommand{\E}{\mathbb E}
\newcommand{\F}{\mathcal F}
\newcommand{\G}{\mathcal G}
\renewcommand{\Pr}{\mathop{\mathrm{Pr}}\nolimits}
\DeclareMathOperator{\Bin}{Bin}
\DeclareMathOperator{\VC}{VC}

\title{Thresholds and spread in set systems of bounded VC-dimension}
\author{Chong Shangguan\thanks{Research Center for Mathematics and Interdisciplinary Sciences,
Shandong University, Qingdao 266237, China, and Frontiers Science Center
for Nonlinear Expectations, Ministry of Education, Qingdao 266237, China.
Email: \mbox{\texttt{theoreming@163.com}}.}}
\date{}

\begin{document}
\maketitle

\begin{abstract}
Let $p_c(\mathcal F)$, $q(\mathcal F)$, and $q_f(\mathcal F)$ denote the threshold, expectation threshold, and fractional expectation threshold of a family $\mathcal F$ of nonempty subsets of a finite set, respectively. We prove that there is an absolute constant $C>0$ such that, if $\mathcal F$ has VC dimension at most $d\ge1$, then $p_c(\mathcal F)\le Cq(\mathcal F)\log(d+1)$. More generally, for every $0<\varepsilon\le1/2$, a binomial random set of density $\min\{1,Cq(\mathcal F)\log((d+1)/\varepsilon)\}$ contains a member of $\mathcal F$ with probability at least $1-\varepsilon$. Consequently, $q_f(\mathcal F)\le Cq(\mathcal F)\log(d+1)$, verifying Talagrand's integral--fractional conjecture for families of any fixed VC dimension.

We also prove that if a $k$-spread probability measure has support of VC dimension at most $d$, then a binomial random set of density $\min\{1,(C/k)\log((d+1)/\varepsilon)\}$ contains a member of its support with probability at least $1-\varepsilon$. In both random-containment results, the factor \(\log((d+1)/\varepsilon)\) is optimal up to absolute constants. As an application of the spread theorem, we prove that every $n$-uniform family of VC dimension at most $d$ with more than $(C p^{-1}\log((d+1)/\varepsilon))^n$ members contains a $(p,\varepsilon)$-robust sunflower. In particular, every such family with more than $(Cr\log(d+1))^n$ members contains an $r$-sunflower, improving the recent bound $(Crd)^n$ of Ge, Wang, Xu, and Zhao.
\end{abstract}


\section{Introduction}

A basic problem in probabilistic combinatorics is to understand the threshold at which a random subset of a finite ground set contains a member of a prescribed set system. We briefly recall the necessary definitions and background; for more details, see the recent surveys~\cite{ParkSurvey,ParkICM,Perkins,Lovett}.
Throughout the paper, $\log$ denotes the natural logarithm, and $\log^*$ denotes the iterated logarithm.

Let $U$ be a finite set, and write $W\sim\Bin(U,p)$ when each point of $U$ is included in $W$ independently with probability $p$. Let $\F\subseteq2^U\setminus\{\varnothing\}$ be nonempty. Its \emph{threshold} $p_c(\F)$ is the unique $p$ such that $\Pr_W[\exists S\in\F: S\subseteq W]=1/2$. 

For a family $\G\subseteq 2^U$, the \emph{span} of $\G$ is $\langle\G\rangle=\{A\subseteq U:\exists T\in\G,\ T\subseteq A\}$. We say that $\G$ \emph{covers} $\F$ if $\F\subseteq\langle\G\rangle$. If $\G$ covers $\F$, then every $W$ containing a member of $\F$ contains a member of $\G$. Hence $\Pr_W[\exists S\in\F: S\subseteq W]\le \sum_{T\in\G}p^{|T|}$. Optimizing this first-moment bound over all covers leads to the \emph{expectation threshold} $q(\F)$: it is the largest $p\in[0,1]$ for which some cover $\G$ satisfies $\sum_{T\in\G}p^{|T|}\le\frac12$. For future reference, we say that $\F$ is \emph{$p$-small} if it admits such a cover.

One may assign weights rather than choose an integral cover. A function $\lambda:2^U\to[0,\infty)$ is a \emph{fractional cover} for $\F$ if $\sum_{T\subseteq S}\lambda(T)\ge1$ for every $S\in\F$. The \emph{fractional expectation threshold} $q_f(\F)$ is the largest $p\in[0,1]$ for which there exists a fractional cover for $\F$ with $\sum_{T\subseteq U}\lambda(T)p^{|T|}\le\frac12$.

The definitions and the first-moment bound give \(q(\F)\le q_f(\F)\le p_c(\F)\); see~\cite{FKNP}. Relations among these three parameters have motivated several important conjectures. Let $\min(\F)$ denote the family of inclusion-minimal members of $\F$, and set $\ell(\F)=\max\{2,\max_{S\in\min(\F)}|S|\}$. Kahn and Kalai~\cite{KK} conjectured that there is an absolute constant $C>0$ such that \(p_c(\F)\le Cq(\F)\log\ell(\F)\). Talagrand~\cite{Talagrand} later proposed the fractional version of this conjecture, with $q_f(\F)$ in place of $q(\F)$. The fractional conjecture was proved by Frankston, Kahn, Narayanan and Park~\cite{FKNP}, while the original Kahn--Kalai conjecture was subsequently proved by Park and Pham~\cite{PP}.

Talagrand~\cite{Talagrand} also conjectured the stronger integral--fractional bound $q_f(\F)\le Cq(\F)$ with an absolute constant $C$. This remains open in general. Recently, Pham~\cite[Corollary~1.8]{Pham} obtained \(q_f(\F)\le Cq(\F)\log\log(4|U|)\), and Park~\cite{Park} subsequently proved a bound \(q_f(\F)\le Cq(\F)\max\{1,\log\log(1/q(\F))\}\) which is independent of the ground-set size.

We now turn to set systems of bounded VC dimension, focusing on bounds for the threshold in terms of the expectation threshold.

\subsection{Main results}

\paragraph{Thresholds versus expectation thresholds.}
The logarithmic dependence on $\ell(\F)$ in the Kahn--Kalai conjecture is necessary in general~\cite{KK}; see also the transversal example in Section~\ref{sec:thresholds}. It is therefore natural to ask whether additional structure of the set system can reduce or eliminate this dependence. 

A set $D\subseteq U$ is \emph{shattered} by $\F$ if $\{S\cap D:S\in\F\}=2^D$, and the \emph{VC dimension} $\VC(\F)$ is the largest size of a set shattered by $\F$. Balogh, Bernshteyn, Delcourt, Ferber and Pham~\cite{BBDFP} proved that if $\VC(\F)\le d$, then \(p_c(\F)\le Cq(\F)\bigl(\log(d+1)+\log^*\ell(\F)\bigr)\)
for an absolute constant $C>0$. Our first result removes the dependence on $\ell(\F)$ entirely.

\begin{theorem}\label{thm:main}
There is an absolute constant $C>0$ with the following property. Let $\F\subseteq2^U\setminus\{\varnothing\}$ be nonempty, and suppose that $\VC(\F)\le d$ for an integer $d\ge1$. For $0<\varepsilon\le1/2$, set
\begin{equation*}
p=\min\left\{1,Cq(\F)\log\frac{d+1}{\varepsilon}\right\}.
\end{equation*}
If $W\sim\Bin(U,p)$, then
\(\Pr_W[\exists S\in\F: S\subseteq W]\ge1-\varepsilon\).
In particular, $p_c(\F)\le Cq(\F)\log(d+1)$.
\end{theorem}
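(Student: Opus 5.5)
We follow the Park--Pham covering strategy: we run the iterative ``minimal fragment'' argument, but replace the step that costs $\log\ell(\F)$ by one that exploits bounded VC dimension.

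Write $q=q(\F)$, so $\F$ is not $2q$-small (say). Expose $W$ in rounds, $W=W_1\cup\dots\cup W_m$, where $W_i\sim\Bin(U,p_i)$ independently. Track the current family of ``residual fragments''
\[
\F_i=\{T_i(S)=S_{i-1}\setminus W_i\}
\]
for $S\in\F_{i-1}$, where $T_i(S)$ is chosen as a minimal fragment $S'\setminus W_i$ with $S'\subseteq S$, $S'\in\F_{i-1}$. The Park--Pham counting lemma shows the following: if $p_i\ge Lq$ for a large constant $L$, then outside an event of probability $\exp(-\Omega(\cdot))$, fragments of size at least $t$ admit a cheap cover. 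Here ``cheap'' means of cost at most $L^{-t/2}$ relative to the $q$-scale. Consequently most surviving sets shrink by a constant factor per round, and every set that was not covered cheaply keeps non-smallness.

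In the general proof one needs $\log\ell$ rounds to shrink from size $\ell$ to $0$, and this is exactly where the $\log\ell$ loss comes from.

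The key new step is to use VC dimension at most $d$ (which is inherited by the fragment families $\F_i$, since traces of $\F$ can only shatter what $\F$ shatters) to skip the shrinking phase for large sets. The Sauer--Shelah lemma bounds the number of distinct traces $S\cap A$ on any set $A$ by $(e|A|/d)^d$. We therefore plan a two-scale argument. After $O(1)$ Park--Pham rounds at density $Cq$, the residual sets have size at most roughly $d$ (more precisely $O(\log((d+1)/\varepsilon))$), except for a part whose cover cost is tiny. For such small residues we finish with a Janson/union-type second phase at density $Cq\log((d+1)/\varepsilon)$. In this phase, the number of relevant small residue patterns is controlled by Sauer--Shelah, so a union bound over at most $(e s/d)^d$ patterns costs only a factor $\exp(O(d\log))$. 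This is absorbed by the density $\log((d+1)/\varepsilon)$ once we condition on the probability being $\ge1-\varepsilon$.

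Concretely, I would prove a lemma of the following form. If $\G$ has VC dimension at most $d$, is not $q$-small, and $W'\sim\Bin(U,Kq\log((d+1)/\varepsilon))$, then $W'$ contains a member of $\G$ with probability $\ge1-\varepsilon/2$. To prove it, I would apply the Park--Pham lemma with $\ell$ replaced by an effective size $\ell^*=O(d)$. This replacement is justified by an $\varepsilon$-net / chaining argument (Haussler's packing lemma): any family of VC dimension $d$ can be covered, up to a cheap remainder, by a family whose minimal elements have size $O(d\log(1/\cdot))$. Indeed, a random $p$-sample hits every set of $\F$ of relative weight $\ge\eta$ once it has size $O((d/\eta)\log(1/\eta))$, which lets one replace large sets by $\varepsilon$-net-sized subsets.

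The main obstacle is precisely this reduction. One must show that large members of $\F$ can be replaced by sub-fragments of size $O(d)$ without destroying non-$q$-smallness by more than a constant factor. I would first try Haussler's packing bound on the traces of $\F$ on $W_1$, and combine it with the Park--Pham minimal-fragment counting applied on the trace system to bound the number of fragments of each size by $(O(1)/q)^t$ uniformly. If this works, the final accounting is a geometric sum $\sum_t (O(1)/L)^{t}$ plus a $\log((d+1)/\varepsilon)$ tail. Taking $\varepsilon=1/2$ then gives $p_c(\F)\le Cq(\F)\log(d+1)$.
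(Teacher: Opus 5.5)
\textbf{There is a genuine gap: your plan never supplies the step that removes the dependence on $\ell(\F)$; it only relocates it.}

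The ``concrete lemma'' you propose (VC dimension at most $d$, not $q$-small, density $Kq\log((d+1)/\varepsilon)$, success probability $1-\varepsilon/2$) is Theorem~\ref{thm:main} itself. The claim that $O(1)$ Park--Pham rounds leave residues of size $O(d)$ or $O(\log((d+1)/\varepsilon))$ is asserted without a mechanism.

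In the Park--Pham count, take a minimum fragment $T$ of size $s$ and put $Z=W\cup T$. Then $T$ is an $s$-subset of the common intersection $C(Z)$ of the members inside $Z$. In fact every $s$-subset of $C(Z)$ arises this way: take any member $S'\subseteq Z$ and $W=Z\setminus T$. Since $|C(Z)|$ can be as large as $\ell$, the bound $\binom{|C(Z)|}{s}L^{-s}$ only eliminates fragments of size $\Omega(\ell/L)$, and that is exactly why the rounds must be iterated.

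Sauer--Shelah does not improve this count. Fragments are subsets of traces rather than traces, and the single trace $C(Z)$ already has $\binom{|C(Z)|}{s}$ subsets. This is why the VC double-counting argument of Balogh et al.\ still has to induct on member size and pays $\log^*\ell$.

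Your second phase has a similar defect. Sauer--Shelah bounds traces on a fixed $s$-element set, not the number of small residues scattered over $U$. If you really had a $t$-bounded residual family that is not $q$-small, Lemma~\ref{lem:bell} would finish at density $O(q\log(t/\varepsilon))$, which is how the paper ends.

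\textbf{The $\varepsilon$-net/Haussler route misidentifies the obstacle.} An $\varepsilon$-net is a sample that \emph{meets} every heavy set, which says nothing about \emph{containing} a member. Moreover, non-$q$-smallness is never lost when each $S$ is replaced by a subset $g(S)\subseteq S$, since any cover of $\{g(S)\}$ is already a cover of $\F$. What is lost is completion: containing $g(S)$ does not let you recover $S$. In Park--Pham, completion is free only because the fragment is $S'\setminus W$ with $W$ already exposed, and then you are back to the worst-case count above.

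The paper supplies two ideas for which your plan has no substitute.

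First, for a $k$-spread $\mu$ whose support has VC dimension at most $d$, it \emph{averages} over $S\sim\mu$ instead of counting distinct fragments. It weights traces by their $\mu$-probability and uses that few points have large marginal ($|U_\lambda|\le4d/\lambda^2$). This gives $\E_{S,Z}\binom{|S\cap C(Z)|}{m}\le(C_{\mathrm{tr}}d)^d(4/k)^m$. Hence a single exposure at density $16/k$ compresses the minimum fragment to size $O(d\log(d+1)+\log(1/\varepsilon))$ except with small probability, and Bell's lemma finishes.

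Second, to reach the spread setting from an arbitrary non-small $\F$, it takes an optimal solution of the packing LP. Either this solution is a $k$-spread measure on $\F$, or the saturated sets cover $\F$ and every link $\F_T$ carries a $k$-spread measure, so completion from $T$ is handled by the spread theorem. The independent-sample estimate $\sum_{|T|=m}z(T)^2\le B(4/k)^m$, with $B=(C_{\mathrm{tr}}(d+1))^{d+1}$, bounds the number of saturated $m$-sets by $B(4k)^m$. So the large saturated sets have negligible $a$-cost and can be discarded, leaving a bounded-size family that is not $a$-small for the first exposure.
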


Since $q_f(\F)\le p_c(\F)$, Theorem~\ref{thm:main} also bounds the fractional expectation threshold in terms of the expectation threshold. In particular, Talagrand's integral--fractional conjecture~\cite{Talagrand} holds for set systems with VC dimension bounded by an absolute constant.

\begin{corollary}\label{cor:integral-fractional}
There is an absolute constant $C>0$ with the following property. Let $\F\subseteq2^U\setminus\{\varnothing\}$ be nonempty, and suppose that $\VC(\F)\le d$ for an integer $d\ge1$. Then $q(\F)\le q_f(\F)\le Cq(\F)\log(d+1)$.
\end{corollary}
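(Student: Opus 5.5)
The corollary follows from Theorem~\ref{thm:main} once we recall the chain $q(\F)\le q_f(\F)\le p_c(\F)$ from~\cite{FKNP}. The left inequality holds because an integral cover $\G$ is a fractional cover: take $\lambda=\mathbf 1_{\G}$, so that $\sum_{T\subseteq S}\lambda(T)\ge1$ for every $S\in\F$, and the cost $\sum_T\lambda(T)p^{|T|}=\sum_{T\in\G}p^{|T|}$ is unchanged. Hence every $p$ witnessing $q(\F)$ also witnesses $q_f(\F)$.

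For the right inequality, fix a fractional cover $\lambda$ with $\sum_T\lambda(T)p^{|T|}\le\frac12$ and let $W\sim\Bin(U,p)$. Pointwise,
\[
\mathbf 1[\exists S\in\F:S\subseteq W]\le\sum_{T\subseteq W}\lambda(T),
\]
since if some $S\subseteq W$ lies in $\F$, then the sum is at least $\sum_{T\subseteq S}\lambda(T)\ge1$. Taking expectations gives $\Pr_W[\exists S\in\F:S\subseteq W]\le\frac12$. Containing a member of $\F$ is an increasing event, so its probability is strictly increasing in $p$, continuous, and equal to $1$ at $p=1$. Therefore $p\le p_c(\F)$, and taking the supremum over admissible $p$ gives $q_f(\F)\le p_c(\F)$. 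We should check that the supremum in the definition of $q_f$ is attained, or simply note that the inequality passes to the supremum; either way the conclusion holds.

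Finally, apply the "in particular" clause of Theorem~\ref{thm:main}, namely $p_c(\F)\le Cq(\F)\log(d+1)$, which is the case $\varepsilon=1/2$. With $\varepsilon=1/2$ the theorem gives probability at least $1/2$ at density $\min\{1,Cq(\F)\log(2(d+1))\}$. Since $d\ge1$, we have $\log(2(d+1))\le 2\log(d+1)$, so after enlarging $C$ we get $p_c(\F)\le Cq(\F)\log(d+1)$. This uses monotonicity of the containment probability in $p$, and the cap at $1$ is harmless because $p_c(\F)\le1$. Chaining the three inequalities proves the corollary.

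There is no real obstacle here. The only care needed is the monotonicity and continuity argument that turns "probability at least $1/2$ at density $p$" into $p_c(\F)\le p$, and the adjustment of the constant when passing from $\log((d+1)/\varepsilon)$ to $\log(d+1)$.
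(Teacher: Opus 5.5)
Your proposal is correct and follows the paper's own route: the paper derives the corollary in one line from the chain $q(\F)\le q_f(\F)\le p_c(\F)$ (cited from FKNP) together with the bound $p_c(\F)\le Cq(\F)\log(d+1)$ from Theorem~\ref{thm:main}. You supply the routine details the paper leaves implicit: the first-moment proof of the chain, where strict monotonicity of the containment probability uses that $\F$ is nonempty with $\varnothing\notin\F$, and the constant adjustment $\log(2(d+1))\le2\log(d+1)$ at $\varepsilon=1/2$.
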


We emphasize that the VC-dimension assumption is imposed on the family $\F$ being covered. Since \(p_c,q,q_f\) are unchanged by replacing \(\F\) with \(\min(\F)\), while \(\VC(\min(\F))\le\VC(\F)\), both results hold with \(\VC(\min(\F))\) in place of \(\VC(\F)\).

Recently, Ascoli, He, Park and Talagrand~\cite{AHPT} introduced the \emph{\(k\)-threshold} \(p_k(\mathcal I)\) and showed that Talagrand's discrete convexity conjecture~\cite{TalagrandConvexity} is equivalent to the existence of a universal \(k\ge2\) and an absolute constant \(L\) such that \(p_k(\mathcal I)\le Lq(\mathcal I)\) for every nontrivial increasing family \(\mathcal I\). Since \(p_k(\mathcal I)\le p_1(\mathcal I)=p_c(\mathcal I)\), Theorem~\ref{thm:main} verifies this bound when \(\VC(\min(\mathcal I))\) is bounded by an absolute constant; indeed, it already holds for \(k=1\).

\paragraph{A spread theorem under bounded VC dimension.}

A second viewpoint on threshold problems comes from \emph{spread measures}. Their relevance to the subject emerged from the work of Alweiss, Lovett, Wu and Zhang~\cite{ALWZ} on the Erd\H{o}s--Rado sunflower conjecture~\cite{ER}. Building on their ideas, Frankston, Kahn, Narayanan and Park~\cite{FKNP} proved the fractional Kahn--Kalai conjecture through a combination of linear-programming duality and spread methods; the subsequent proof of the Kahn--Kalai conjecture by Park and Pham~\cite{PP} was again inspired by this line of work, although its analysis avoids the use of spread.

For a real number $k\ge1$, a probability measure $\mu$ on $2^U$ is \emph{$k$-spread} if, for $S\sim\mu$, we have $\Pr_S[T\subseteq S]\le k^{-|T|}$ for every nonempty $T\subseteq U$. The \emph{support} of $\mu$ is $\{S\subseteq U:\mu(S)>0\}$.

Spreadness is closely tied to the fractional expectation threshold. Suppose that $\F$ supports a $k$-spread probability measure $\mu$. If $\lambda$ is any fractional cover of $\F$, then
\begin{equation*}
1\le \E_{S\sim\mu}\sum_{T\subseteq S}\lambda(T)
=\sum_{T\subseteq U}\lambda(T)\Pr_S[T\subseteq S]
\le\sum_{T\subseteq U}\lambda(T)k^{-|T|}.
\end{equation*}
Consequently $q(\F)\le q_f(\F)\le1/k$. Conversely, a linear-programming duality argument due to Talagrand (see~\cite[Proposition~1.5]{FKNP}), together with the trivial fact that every probability measure is $1$-spread, shows that $\F$ supports a $\max\{1,(2q_f(\F))^{-1}\}$-spread probability measure. Thus, up to a factor of two, $q_f(\F)$ is the reciprocal of the largest spread parameter supported by $\F$.

Spread theorems convert this structural information into random containment. A family $\F\subseteq2^U$ is \emph{$n$-bounded} if $|S|\le n$ for every $S\in\F$. If a $k$-spread probability measure has $n$-bounded support $\F$, then the standard spread theorem (see~\cite{ALWZ,Bell}) implies that a random set $W\sim\Bin(U,p)$ contains a member of $\F$ with probability at least $1-\varepsilon$ once $p$ is of order $k^{-1}\log(n/\varepsilon)$. The bounded-VC threshold theorem of Balogh et al.~\cite[Theorem~1.6]{BBDFP} implies the improved bound $p=O(k^{-1}(\log(d/\varepsilon)+\log^*n))$ when $\VC(\F)\le d$; for $d=1$, their Theorem~1.7 already removes the dependence on $n$ entirely. Our spread theorem eliminates this dependence for every $d$.

\begin{theorem}\label{thm:spread}
There is an absolute constant $C>0$ with the following property. Let $\mu$ be a $k$-spread probability measure on $2^U$, where $k\ge1$, whose support $\F$ satisfies $\VC(\F)\le d$ for an integer $d\ge1$. For $0<\varepsilon\le1/2$, set
\begin{equation*}
p=\min\left\{1,\frac Ck\log\frac{d+1}{\varepsilon}\right\}.
\end{equation*}
If $W\sim\Bin(U,p)$, then $\Pr_W[\exists S\in\F:S\subseteq W]\ge1-\varepsilon$.
\end{theorem}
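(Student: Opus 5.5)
The plan is to deduce Theorem~\ref{thm:spread} directly from Theorem~\ref{thm:main}, using the observation above that a family supporting a $k$-spread measure has small expectation threshold. Throughout, take $C$ to be the constant of Theorem~\ref{thm:main}.

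First I dispose of degenerate cases. If $\varnothing\in\F$, then every $W$ contains a member of $\F$, and there is nothing to prove. Otherwise $\F\subseteq 2^U\setminus\{\varnothing\}$. Also $\F$ is nonempty because $\mu$ is a probability measure, and $\VC(\F)\le d$ by hypothesis. So Theorem~\ref{thm:main} applies to $\F$.

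Next I bound $q(\F)$ using the computation displayed before the statement. Fix any $p'$ with $q(\F)\le p'$ and $\F$ $p'$-small, witnessed by a cover $\G$. Its indicator function $\lambda=\mathbf 1_{\G}$ is a fractional cover of $\F$, so
\[
1\le\sum_{T\in\G}\Pr_{S\sim\mu}[T\subseteq S].
\]
Every $T\in\G$ is nonempty: if $\varnothing\in\G$, then $\sum_{T\in\G}p'^{|T|}\ge1>\tfrac12$, contradicting $p'$-smallness. Hence $k$-spreadness gives
\[
1\le\sum_{T\in\G}k^{-|T|}.
\]
If $p'>1/k$, this would give $\sum_{T\in\G}p'^{|T|}\ge\sum_{T\in\G}k^{-|T|}\ge1>\tfrac12$, again a contradiction. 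Taking $p'=q(\F)$ yields $q(\F)\le 1/k$. (Equivalently, one can quote the already noted chain $q(\F)\le q_f(\F)\le 1/k$.)

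Finally, set $p_0=\min\{1,Cq(\F)\log((d+1)/\varepsilon)\}$. The bound $q(\F)\le1/k$ gives $p_0\le p=\min\{1,(C/k)\log((d+1)/\varepsilon)\}$. Theorem~\ref{thm:main} shows that $W_0\sim\Bin(U,p_0)$ contains a member of $\F$ with probability at least $1-\varepsilon$. The event $\{\exists S\in\F:S\subseteq W\}$ is increasing. By the standard monotone coupling, namely $W_0\subseteq W$ with $W\sim\Bin(U,p)$, obtained by including each point independently with probability $p_0/p$ given inclusion in $W$, the same lower bound holds for $W$.

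The only step needing care is the bound on $q(\F)$, specifically the handling of empty sets in the cover and in the support. This is routine. All of the real difficulty lies in Theorem~\ref{thm:main}, which we are allowed to assume. If one instead wanted a proof independent of Theorem~\ref{thm:main}, the hard part would be an iterative spread or cover-refinement argument in which the VC bound controls the number of ``bad'' fragments via a Sauer--Shelah type count. That argument would replace the $\log n$ loss by $\log(d+1)$.
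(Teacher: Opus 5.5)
Your deduction of Theorem~\ref{thm:spread} from Theorem~\ref{thm:main} is valid as an implication. The bound $q(\F)\le 1/k$, the handling of $\varnothing$, and the monotone coupling are all fine, and the paper itself remarks that Theorem~\ref{thm:spread} follows formally from Theorem~\ref{thm:main}. But it cannot serve as a proof here, because it is circular: the paper proves Theorem~\ref{thm:main} \emph{from} Theorem~\ref{thm:spread}. Theorem~\ref{thm:spread} is used twice in that proof. It handles the case $\alpha=1$ of the maximum packing, where the packing is itself a $k$-spread measure on $\F$. It is used again in the second exposure, where a saturated set $T\subseteq V$ is completed to a member of $\F$ via the $k$-spread measure on the link $\F_T$. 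So your proposal defers all of the content to a result whose proof rests on the statement, and it supplies no independent argument.

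What is missing is the direct proof. Expose $V\sim\Bin(U,16/k)$ and take the minimum fragment $M(S,V)$ of $S\sim\mu$. The key observation is that, with $Z=V\cup M(S,V)$, every member of $\F$ inside $Z$ contains $M(S,V)$, so $M(S,V)\subseteq S\cap C(Z)$. Since $V=Z\setminus M(S,V)$, a change of variables gives $\Pr[|M(S,V)|=m]\le (k/16)^m\,\E\binom{|S\cap C(Z)|}{m}$ with $Z\sim\Bin(U,16/k)$ independent of $S$.

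The weighted trace estimate bounds this binomial moment by $(C_{\mathrm{tr}}d)^d(4/k)^m$. It combines Sauer--Shelah, the tail bound $\Pr[|C(Z)\cap E|\ge r]\le \Pi_d(|E|)(16/k)^r$, and a grouping of traces of $\F$ on $C(Z)$ by their $\mu$-weight. Hence $\Pr[|M(S,V)|\ge t]\le \frac43(C_{\mathrm{tr}}d)^d4^{-t}$. With $t=O(d\log(d+1)+\log(1/\varepsilon))$, for most $V$ the short fragments carry a $t$-bounded $(k/2)$-spread measure. Bell's theorem with a second exposure of density $O(k^{-1}\log(t/\varepsilon))=O(k^{-1}\log((d+1)/\varepsilon))$ then finishes.

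Your closing sketch of an iterative cover-refinement argument also points the wrong way. Iterating on member size, as Balogh et al.\ do, is what leaves the $\log^*$ term. The new point is that the fragment-size tail depends only on $d$ and $\varepsilon$, so a single compression step suffices.
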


Although Theorem~\ref{thm:spread} follows formally from Theorem~\ref{thm:main}, since a $k$-spread measure gives $q(\F)\le1/k$, our proof proceeds in the opposite direction. We prove Theorem~\ref{thm:spread} first and use it to derive Theorem~\ref{thm:main}.

\paragraph{Sharpness.}
The dependence on the parameters in our two main theorems is essentially sharp. A standard transversal construction for spread families~\cite{ALWZ,BCW} shows that the dependence on $k$, $d$ and $\varepsilon$ in Theorem~\ref{thm:spread} is optimal up to absolute constants, and also shows that the factor $\log(d+1)$ in Theorem~\ref{thm:main} is necessary. We give the details in Section~\ref{sec:thresholds}. This example has $q(\F)=q_f(\F)$, so it does not establish the necessity of the $\log(d+1)$ factor in Corollary~\ref{cor:integral-fractional}.

\subsection{Applications}

We next highlight two applications of the preceding threshold and spread results.

\paragraph{Sunflowers.}
Sunflowers provide a natural application of spread methods, which played a central role in recent progress on the sunflower problem~\cite{ALWZ}. Distinct sets $S_1,\ldots,S_r$ form an \emph{$r$-sunflower} if $S_i\cap S_j$ is the same set for every $i\ne j$. Fox, Pach and Suk~\cite{FPS} initiated the study of the sunflower problem under a VC-dimension restriction. For $d\ge2$, Balogh et al.~\cite{BBDFP} proved that every $n$-uniform family $\F$ with $\VC(\F)\le d$ and $|\F|>(Cr(\log d+\log^*n))^n$ contains an $r$-sunflower. More recently, Ge, Wang, Xu and Zhao~\cite{GWXZ} removed the dependence on $n$ from the exponential base, proving that $|\F|>(50dr)^n$ suffices.

Robust sunflowers were introduced by Rossman~\cite{Rossman} under the name \emph{quasi-sunflowers}, in connection with lower bounds for monotone circuits. They subsequently became an important ingredient in the modern approach to the sunflower problem; see, for example,~\cite{ALWZ,Lovett}. The general robust sunflower lemma gives an exponential-base bound of order $p^{-1}\log(n/\varepsilon)$ for $n$-uniform families. Our spread theorem replaces this dependence on the member size by a logarithmic dependence on the VC dimension.

A family $\mathcal R\subseteq2^U$ with at least two members is called a \emph{$(p,\varepsilon)$-robust sunflower} (see, e.g.,~\cite{ALWZ,Lovett}) if, writing $T=\bigcap_{R\in\mathcal R}R$, we have $T\notin\mathcal R$ and $\Pr_W[\exists R\in\mathcal R:R\subseteq T\cup W]\ge1-\varepsilon$ for $W\sim\Bin(U,p)$. The set $T$ is called the \emph{core}. Thus every \emph{petal} $R\setminus T$ is nonempty. The petals need not be pairwise disjoint, and a binomial random set of density $p$ contains at least one of them with probability at least $1-\varepsilon$.

A maximal-core argument turns a sufficiently large uniform family into a spread family of petals, to which Theorem~\ref{thm:spread} applies. In the bounded-VC setting, this yields a robust sunflower bound with exponential base of order $p^{-1}\log((d+1)/\varepsilon)$, independent of the member size.

\begin{corollary}\label{cor:robust}
There is an absolute constant $C>0$ such that, for integers $d,n\ge1$, $0<p\le1$ and $0<\varepsilon\le1/2$, every $n$-uniform family $\F\subseteq2^U$ with $\VC(\F)\le d$ and
\begin{equation*}
|\F|>\left(\frac Cp\log\frac{d+1}{\varepsilon}\right)^n
\end{equation*}
contains a $(p,\varepsilon)$-robust sunflower.
\end{corollary}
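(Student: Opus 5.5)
We will derive Corollary~\ref{cor:robust} from Theorem~\ref{thm:spread} using the standard maximal-core argument. Let $C_0$ be the constant of Theorem~\ref{thm:spread}, and set $k=\frac{C_0}{p}\log\frac{d+1}{\varepsilon}$. We take $C\ge C_0$, so that $|\F|>k^n$. If $k<1$, then $p>C_0\log\frac{d+1}{\varepsilon}$, which is impossible once $C_0\ge1$, because $\log\frac{d+1}{\varepsilon}\ge\log 4>1$. So we may assume $k\ge1$.

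\textbf{Choosing the core.} Let $T\subseteq U$ be a set maximizing $|\F_T|\,k^{|T|}$, where $\F_T=\{S\in\F:T\subseteq S\}$. The choice $T=\varnothing$ gives the value $|\F|>k^n$. Every $T$ with $|T|=n$ gives value at most $k^n$. Hence the maximizer has $|T|<n$ and $|\F_T|\ge 2$. For every nonempty $A$ disjoint from $T$, maximality gives $|\F_{T\cup A}|\,k^{|T|+|A|}\le|\F_T|\,k^{|T|}$, that is, $|\F_{T\cup A}|\le k^{-|A|}|\F_T|$.

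\textbf{Spread measure on petals.} Let $\mu$ be the uniform measure on the petal family $\mathcal P=\{S\setminus T:S\in\F_T\}$. By uniformity the map $S\mapsto S\setminus T$ is injective, and each petal is nonempty of size $n-|T|$. The inequality above says exactly that $\Pr_{P\sim\mu}[A\subseteq P]\le k^{-|A|}$ for every nonempty $A$ disjoint from $T$. If $A$ meets $T$, this probability is $0$. So $\mu$ is $k$-spread with support $\mathcal P$.

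\textbf{VC dimension of the petals.} If $D$ is shattered by $\mathcal P$, then $D\cap T=\varnothing$, since no petal meets $T$ and so shattering forces $D\subseteq U\setminus T$. For such $D$ we have $P\cap D=S\cap D$ for the corresponding $S\in\F_T$, so $D$ is shattered by $\F$. Hence $\VC(\mathcal P)\le d$.

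\textbf{Conclusion.} Theorem~\ref{thm:spread} applies with density $\min\{1,\frac{C_0}{k}\log\frac{d+1}{\varepsilon}\}=\min\{1,p\}=p$. Thus $W\sim\Bin(U,p)$ contains some petal $S\setminus T$ with probability at least $1-\varepsilon$, and then $S\subseteq T\cup W$.

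It remains to check that $\mathcal R=\F_T$ is a robust sunflower. It has at least two members. Its intersection $T'=\bigcap\mathcal R$ contains $T$ and has size less than $n$, so $T'\notin\mathcal R$. Since $T\cup W\subseteq T'\cup W$, the containment probability for $T'$ is at least that for $T$. Therefore $\F_T$ is a $(p,\varepsilon)$-robust sunflower, and it suffices to take $C=\max\{C_0,1\}$.

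The only delicate steps are handling the possibility $k<1$ and the observation that the actual core $T'$ may be strictly larger than $T$. Both are dealt with above; everything else is direct.
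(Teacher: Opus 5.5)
Your proposal is correct and follows the paper's own maximal-core argument: choose a core $T$ by a maximality condition, show that the uniform measure on the petals is $k$-spread with VC dimension at most $d$, and apply Theorem~\ref{thm:spread}. The differences are cosmetic. You take a global maximizer of $|\F_T|\,k^{|T|}$ (in your notation $\F_T=\{S\in\F:T\subseteq S\}$), whereas the paper takes an inclusion-maximal $T$ contained in at least $|\F|k^{-|T|}$ members. You also handle a possibly larger true core $T'\supseteq T$ by monotonicity, whereas the paper uses $k>1$ to show directly that the common intersection is exactly $T$.
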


Taking $p=1/(2r)$ and $\varepsilon=1/2$, and then using a random coloring of the petals, gives the ordinary sunflower consequence that every $n$-uniform family $\F$ with $\VC(\F)\le d$ and $|\F|>(Cr\log(d+1))^n$ contains an $r$-sunflower. Thus the exponential base is independent of the member size, while the dependence on the VC dimension improves from the linear bound in~\cite{GWXZ} to logarithmic.

\paragraph{Families with bounded intersections.}
Another application comes from set systems with small pairwise intersections. If $\F\subseteq2^U\setminus\{\varnothing\}$ satisfies $|A\cap B|\le s$ for all distinct $A,B\in\F$, then $\VC(\F)\le s+1$. Hence the result of Balogh et al.~\cite{BBDFP} gives $p_c(\F)\le Cq(\F)(\log(s+2)+\log^*\ell(\F))$. Our theorem removes the dependence on the maximum member size entirely.

\begin{corollary}\label{cor:bounded-intersections}
There is an absolute constant $C>0$ such that, for every integer $s\ge0$, if $\F\subseteq2^U\setminus\{\varnothing\}$ is nonempty and $|A\cap B|\le s$ for all distinct $A,B\in\F$, then $p_c(\F)\le Cq(\F)\log(s+2)$.
\end{corollary}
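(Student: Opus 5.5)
The plan is to reduce Corollary~\ref{cor:bounded-intersections} to Theorem~\ref{thm:main}. The only work is to show that a family with pairwise intersections of size at most $s$ has VC dimension at most $s+1$. Then Theorem~\ref{thm:main} with $d=s+1\ge1$ gives $p_c(\F)\le Cq(\F)\log(s+2)$ directly.

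For the VC bound, suppose $D\subseteq U$ is shattered by $\F$ and $|D|\ge s+2$; we derive a contradiction. Fix a subset $D'\subseteq D$ with $|D'|=s+1$ and a point $x\in D\setminus D'$. Shattering provides $A\in\F$ with $A\cap D=D'$ and $B\in\F$ with $B\cap D=D'\cup\{x\}$. These sets are distinct, since their traces on $D$ differ. Yet $A\cap B\supseteq D'$, so $|A\cap B|\ge s+1$, contradicting the hypothesis. Hence every shattered set has size at most $s+1$, that is, $\VC(\F)\le s+1$.

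We also need $|D|\ge s+2$ to leave room for both $D'$ and $x$, which holds by assumption. Edge cases are harmless: if $\F$ is a single set, its VC dimension is $0\le s+1$, and Theorem~\ref{thm:main} only needs $\VC(\F)\le d$ for some integer $d\ge1$. Applying Theorem~\ref{thm:main} with $d=s+1$ and $\varepsilon=1/2$ gives
\[
p_c(\F)\le Cq(\F)\log(s+2),
\]
with the same absolute constant $C$. If $Cq(\F)\log(s+2)\ge1$, the bound is trivial because $p_c\le1$.

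None of these steps is a real obstacle. The only point needing care is choosing the two distinct members $A$ and $B$ so that their intersection is forced to contain $s+1$ points.
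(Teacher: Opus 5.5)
Your proposal is correct and follows the paper's route exactly: the paper also deduces this corollary from Theorem~\ref{thm:main} with $d=s+1$ via $\VC(\F)\le s+1$, a bound it states without proof and which your two-member argument ($A\cap D=D'$, $B\cap D=D'\cup\{x\}$) correctly justifies. One small nit: substituting $\varepsilon=1/2$ literally gives the factor $\log(2(s+2))\le 2\log(s+2)$ rather than $\log(s+2)$, so either cite the ``in particular'' clause of Theorem~\ref{thm:main} directly or absorb the factor $2$ into $C$.
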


In particular, this implies that every nonempty linear hypergraph without an empty edge satisfies $p_c=\Theta(q)$ with absolute constants. 

\paragraph{Organization of the paper.}
Section~\ref{sec:weighted-traces} proves a weighted trace estimate and derives its common-intersection and independent-sample consequences. Section~\ref{sec:fragments} uses the common-intersection estimate and a theorem of Bell~\cite{Bell} to prove Theorem~\ref{thm:spread}. Section~\ref{sec:thresholds} combines maximum packing with the independent-sample estimate to prove Theorem~\ref{thm:main} and gives the sharpness example. Section~\ref{sec:applications} derives the sunflower consequences. Each section begins with a brief roadmap outlining its main result and the key ideas of the argument.

\section{Trace estimates for spread measures with bounded-VC support}\label{sec:weighted-traces}

Throughout this section, $k\ge8$, $d$ is a positive integer, and $\mu$ is a $k$-spread probability measure on $2^U$ with support $\F$ satisfying $\VC(\F)\le d$. If $S\in\F$ is sampled according to $\mu$, we write $S\sim\mu$. For a set $E\subseteq U$, the \emph{trace} of $\F$ on $E$ is the family $\F|_E=\{S\cap E:S\in\F\}$.
If $S$ is sampled from a probability measure $\mu$ supported on $\F$, then $\mu$ induces a probability distribution on $\F|_E$, assigning to each $A\in\F|_E$ the probability $\Pr_S[S\cap E=A]$. For every nonnegative integer $a$, define
\begin{equation*}
\Pi_d(a):=\sum_{i=0}^{\min\{d,a\}}\binom ai.
\end{equation*}
By the Sauer--Shelah lemma~\cite{Sauer,Shelah}, if $E\subseteq U$ has size $a$, then $\F|_E$ has at most $\Pi_d(a)$ members. We will use the standard estimates
\begin{equation}\label{eq:sauer}
\Pi_d(a)\le \left[e\left(1+\frac ad\right)\right]^d,
\end{equation}
and, when $a\ge d$, $\Pi_d(a)\le(ea/d)^d$. As usual, $\binom am=0$ for integers $m>a\ge0$.

The proofs of Theorems~\ref{thm:main} and~\ref{thm:spread} rely on a common estimate for the intersection of $S$ with an auxiliary random set. The first ingredient is a version of \cite[Lemma~2.3]{GWXZ} for general spread probability measures: spreadness together with bounded VC dimension implies that the set of points with large marginal inclusion probability is small. We combine this with a tail bound for the auxiliary random set and then group traces according to their probabilities. This yields an exponential bound for the distribution of the intersection size and, consequently, for its binomial moments (see Proposition~\ref{prop:weighted-trace}).

We will apply the resulting estimate in two settings. When the auxiliary set is the common intersection $C(Z)$ defined below (see Corollary~\ref{cor:common-intersection}), it gives the estimate needed for the minimum-fragment argument in Section~\ref{sec:fragments}. When the auxiliary set is an independent copy of $S$ (see Corollary~\ref{cor:independent-intersection}), it controls the intersection of two independent samples and will be used in Section~\ref{sec:thresholds}.

\subsection{A weighted trace estimate}

The following proposition isolates the tail condition on the auxiliary random set \(D\) needed in both applications.


\begin{proposition}\label{prop:weighted-trace}
There is an absolute constant $C_{\mathrm{tr}}\ge1$ with the following property. Let $k\ge8$, let $d\ge1$ be an integer, and let $\mu$ be a $k$-spread probability measure on $2^U$ whose support $\F$ satisfies $\VC(\F)\le d$. Let $S\sim\mu$, and let $D\subseteq U$ be a random set independent of $S$. Suppose that, for some $0<p\le k^{-1/8}$,
\begin{equation}\label{eq:trace-tail}
\Pr_D[|D\cap E|\ge r]\le\Pi_d(|E|)p^r
\end{equation}
for every $E\subseteq U$ and every integer $r\ge1$. For a fixed realization of $D$ and a trace $A\in\F|_D$, write $w_D(A)=\Pr_S[S\cap D=A]$.
Then, for every integer $a\ge1$,
\begin{equation}\label{eq:weighted-trace}
\Pr_{S,D}[|S\cap D|=a]
=\E_D\sum_{A\in\F|_D,\,|A|=a}w_D(A)
\le (C_{\mathrm{tr}}d)^d\left(\frac2k\right)^a.
\end{equation}
Consequently, for every integer $m\ge1$,
\begin{equation}\label{eq:intersection-bound}
\E_{S,D}\binom{|S\cap D|}{m}
\le (C_{\mathrm{tr}}d)^d\left(\frac4k\right)^m.
\end{equation}
\end{proposition}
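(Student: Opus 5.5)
The plan is to bound, for each fixed realization of $D$, the total weight $\Sigma_a(D):=\sum_{A\in\F|_D,\,|A|=a}w_D(A)$ in two complementary ways, and then average over $D$ using \eqref{eq:trace-tail}. The first way uses spreadness: $w_D(A)\le\Pr_S[A\subseteq S]\le k^{-a}$, so $\Sigma_a(D)\le N_a(D)\,k^{-a}$, where $N_a(D)$ is the number of traces of size $a$. The second way is trivial: $\Sigma_a(D)\le 1$. The difficulty is that $N_a(D)$ can be huge when $|D|$ is large. I would therefore first restrict to the points that actually carry mass, following the version of \cite[Lemma~2.3]{GWXZ} announced above.

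\emph{Step 1 (heavy points).} Fix a threshold $\theta$, say $\theta=k^{-1/2}$, and let $H=\{x\in U:\Pr_S[x\in S]\ge\theta\}$. I would prove that $|H|\le (Cd)^{O(1)}\theta^{-1}$, or at least that $|H|$ is polynomial in $d$ and in a small power of $k$. The argument I have in mind is a double count. By Sauer--Shelah, $\F|_H$ has at most $\Pi_d(|H|)$ members. On the other hand, $\E_S|S\cap H|\ge\theta|H|$, and spreadness gives $\Pr_S[|S\cap H|\ge t]\le\binom{|H|}{t}k^{-t}$. Now split $\E_S|S\cap H|$ according to the trace: a trace of size $t$ has mass at most $k^{-t}$, and there are at most $\Pi_d(|H|)$ traces in total. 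Summing $t\cdot k^{-t}$ over the polynomially many available traces, and comparing with $\theta|H|$, should force $|H|$ to be small.

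\emph{Step 2 (splitting a trace).} Write $A=A_H\cup A_L$ with $A_H=A\cap H$ and $A_L=A\setminus H$. For the light part, spreadness together with the marginal bound $\Pr[x\in S]<\theta$ should give a gain: the mass of traces whose light part has size $b$ decays like $(\text{const}/k)^{b}$, uniformly in $D$. To get this I would group traces dyadically by weight. There are at most $2^{j+1}$ traces of weight in $[2^{-j-1},2^{-j})$, while each trace of size $a$ has weight at most $k^{-a}$. This caps the total contribution of size-$a$ traces at roughly $\sum_j \min\{2^{j+1}k^{-a},\,2^{-j}\cdot\#\}$, which produces the factor $(2/k)^a$ once the count of light configurations is controlled. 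For the heavy part, the number of possible $A_H$ is at most $\Pi_d(|D\cap H|)$. I would apply \eqref{eq:trace-tail} with $E=H$ to get $\Pr_D[|D\cap H|\ge r]\le \Pi_d(|H|)p^r$. Since $p\le k^{-1/8}$ and $|H|$ is only polynomial in $k$, the quantity $\E_D\,\Pi_d(|D\cap H|)$ summed against the weights stays bounded by $(Cd)^d$ via \eqref{eq:sauer}. This is where the exponent $1/8$ must beat the polynomial size of $H$, and it dictates the choice of $\theta$.

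\emph{Step 3 (moments).} Given \eqref{eq:weighted-trace}, the bound \eqref{eq:intersection-bound} is routine:
\[
\E\binom{|S\cap D|}{m}=\sum_{a\ge m}\binom am\Pr[|S\cap D|=a]\le (C_{\mathrm{tr}}d)^d\sum_{a\ge m}\binom am\Bigl(\frac2k\Bigr)^a=(C_{\mathrm{tr}}d)^d\frac{(2/k)^m}{(1-2/k)^{m+1}},
\]
and for $k\ge8$ the right-hand side is at most $(C_{\mathrm{tr}}d)^d(4/k)^m$ after adjusting constants. The main obstacle is Step 2: obtaining a bound that is simultaneously uniform in the size of $D$ and exponential in $a$. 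Concretely, the danger is that the tail hypothesis is only available for deterministic $E$, so every set to which it is applied (here $H$) must be chosen independently of $D$. If the heavy/light split fails to close, my fallback would be to iterate the threshold over dyadic scales $\theta_i=k^{-i/8}$. At each scale I would apply \eqref{eq:trace-tail} to the corresponding fixed heavy set, and then sum the resulting geometric series.
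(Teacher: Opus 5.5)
\textbf{There is a genuine gap in Step~2.}

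Your heavy set is empty. Spreadness applied to singletons gives $\Pr_S[x\in S]\le 1/k<k^{-1/2}=\theta$ for every $x$, so $H=\varnothing$ and $A_L=A$ for every trace. More generally, spreadness alone gives no control on $\{x:\Pr_S[x\in S]\ge\lambda\}$ once $\lambda\le 1/k$, and that is exactly the range that matters.

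Step~2 therefore reduces to the claim that, for every realization of $D$, the $\mu$-mass of traces of size $b$ decays like $(c/k)^b$. This is false. Take the transversal family with blocks of size $L=k$: the uniform measure is $k$-spread and has VC dimension $d$, yet the realization $D=U$ gives $\Pr_S[|S\cap D|=d]=1$.

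So any proof must use the randomness of $D$. Hypothesis \eqref{eq:trace-tail} is only useful on \emph{fixed} sets $E$ with $\Pi_d(|E|)$ under control, and certainly not on $E=U\setminus H$. Your dyadic count (at most $2^{j+1}$ traces of weight about $2^{-j}$) uses only that the total mass is $1$. It yields no decay in $a$, because the ``$\#$'' you leave unspecified is precisely the missing count.

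\textbf{The missing idea} is to tie the heavy set to the weight of the trace itself, not to one fixed threshold. If $w_D(A)\ge\lambda$, then every $x\in A$ satisfies $\Pr_S[x\in S]\ge w_D(A)\ge\lambda$. Hence $A\subseteq U_\lambda:=\{x:\Pr_S[x\in S]\ge\lambda\}$, and distinct such $A$ are distinct traces of $\F$ on $D\cap U_\lambda$. So there are at most $\Pi_d(|D\cap U_\lambda|)$ of them.

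Group the traces of size $a$ by $w_D(A)\in(k^{-a}e^{-(j+1)},k^{-a}e^{-j}]$. The relevant thresholds are then $\lambda=k^{-a}e^{-(j+1)}$, far below $1/k$. In that range you need the VC bound $|U_\lambda|\le 4d/\lambda^2$, which follows from $k^{\E_S|S\cap E|}\le\E_S k^{|S\cap E|}\le\Pi_d(|E|)$. Your Step~1 double count, which bounds $\E_S|S\cap H|$ by $\Pi_d(|H|)\max_t tk^{-t}$, does not close for $d\ge2$, since $\Pi_d$ grows like $|H|^d$; you must take logarithms or truncate.

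Applying \eqref{eq:trace-tail} to the fixed set $E=U_\lambda$ and using $p\le k^{-1/8}$ gives
\[
\E_D\Pi_d(|D\cap U_\lambda|)\le\bigl(C(1+\log(1/\lambda)/\log(1/p))\bigr)^d\le\bigl(C(a+j+1)\bigr)^d .
\]
Summing $k^{-a}e^{-j}\bigl(C(a+j+1)\bigr)^d$ over $j$ gives $(C(a+d))^dk^{-a}\le(C'd)^d(2/k)^a$.

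Your fallback with scales $k^{-i/8}$ points in this direction. However, it still needs two things: each scale must be indexed by the weight of the whole trace, and you need the inclusion $A\subseteq U_{w_D(A)}$. Without these you have no way to count the traces living at a given scale.

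Step~3 is fine.
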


We call this a weighted trace estimate because, instead of merely counting the traces $A\in\F|_D$, we weight each trace by $w_D(A)=\Pr_S[S\cap D=A]$ and bound the expected total weight of the traces of each fixed size.

\begin{proof}
Throughout the proof, $C>0$ denotes an absolute constant that may change from line to line.
We first establish a version of \cite[Lemma~2.3]{GWXZ} for general spread probability measures. 
For $0<\lambda\le1$, put $U_\lambda=\{x\in U:\Pr_S[x\in S]\ge\lambda\}$.

\begin{claim}\label{claim1}
For every $0<\lambda\le1$,
\begin{equation}\label{eq:frequent}
|U_\lambda|\le\frac{4d}{\lambda^2}.
\end{equation}
\end{claim}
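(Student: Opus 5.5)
The plan is to argue by contradiction. We take a subset $X\subseteq U_\lambda$ of a carefully chosen size $m$ and compare two estimates for the distribution of $|S\cap X|$ with $S\sim\mu$. A lower bound will come from the marginals, and an upper bound from spreadness combined with the Sauer--Shelah bound on the number of traces. If $|U_\lambda|>4d/\lambda^2$, we pick $X\subseteq U_\lambda$ with $|X|=m:=\lceil 4d/\lambda^2\rceil$. Note $m\ge 4d\ge d$, so the estimate $\Pi_d(m)\le(em/d)^d$ is available.

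\emph{Lower bound.} Every $x\in X$ satisfies $\Pr_S[x\in S]\ge\lambda$, so $\E|S\cap X|\ge\lambda m$. Since $0\le|S\cap X|\le m$, a reverse Markov argument gives
\[
\Pr_S\bigl[|S\cap X|\ge \lambda m/2\bigr]\ge \lambda/2.
\]
To see this, split the expectation on the event and its complement: $\lambda m\le m\Pr[\,\cdot\,]+\lambda m/2$.

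\emph{Upper bound.} Put $t=\lceil\lambda m/2\rceil\ge1$. We sum over the traces $A\in\F|_X$ with $|A|\ge t$. For each such $A$, spreadness gives $\Pr_S[S\cap X=A]\le\Pr_S[A\subseteq S]\le k^{-|A|}\le k^{-t}$. There are at most $\Pi_d(m)$ traces by the Sauer--Shelah lemma, so
\[
\Pr_S\bigl[|S\cap X|\ge t\bigr]\le \Pi_d(m)\,k^{-t}\le \Bigl(\frac{em}{d}\Bigr)^d 8^{-\lambda m/2},
\]
using $k\ge 8$.

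\emph{Contradiction.} It then suffices to check that
\[
\Bigl(\frac{em}{d}\Bigr)^d 8^{-\lambda m/2}<\frac{\lambda}{2}
\qquad\text{for } m=\lceil 4d/\lambda^2\rceil,\ 0<\lambda\le1,\ d\ge1.
\]
Since $(em/d)^d 8^{-\lambda m/2}$ is decreasing in $m$ once $\lambda m/2\cdot\log 8$ dominates $d\log m$, we may essentially substitute $m\approx4d/\lambda^2$. Up to the rounding in $m$, this gives $\bigl(4e\lambda^{-2}\,8^{-2/\lambda}\bigr)^d$. At $\lambda=1$ this is $(4e/64)^d<1/2\cdot(0.34)^{d-1}$. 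As $\lambda$ decreases, the factor $8^{-2/\lambda}$ decays far faster than $\lambda^{-2}$ grows and than $\lambda/2$ shrinks. So the inequality should hold uniformly, which contradicts the lower bound.

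The main (though routine) obstacle is this last numerical verification. We must handle the ceiling in $m$ and confirm monotonicity in $m$ at the relevant range. We also need the constants to genuinely close for all $\lambda\in(0,1]$ and $d\ge1$, not only asymptotically. If the constant $4$ turns out to be slightly too tight for some edge case, we would instead choose $X$ of size exactly $\lfloor 4d/\lambda^2\rfloor+1$ and bound $\lambda/2$ against $\bigl(e(4/\lambda^2+1/d)\bigr)^d8^{-2d/\lambda}$ directly, taking logarithms and comparing $d\log(4e/\lambda^2+e)$ with $(2d/\lambda)\log 8+\log(2/\lambda)$.
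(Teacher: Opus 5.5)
Your proof is correct, but it turns the trace count into a bound on $|U_\lambda|$ by a different mechanism than the paper. The paper neither passes to a subset nor argues by contradiction. Spreadness and Sauer--Shelah give $\E_S k^{|S\cap E|}=\sum_{A\in\F|_E}k^{|A|}\Pr_S[S\cap E=A]\le\Pi_d(|E|)$, and Jensen turns this into $k^{\E_S|S\cap E|}\le\Pi_d(|E|)$. With $E=U_\lambda$ this yields $|U_\lambda|\lambda\log k\le d\log(e|U_\lambda|/d)\le2\sqrt{d|U_\lambda|}$, and $\log k>1$ gives $|U_\lambda|\le4d/\lambda^2$ at once. You use the same two ingredients only through a union bound on the upper tail of $|S\cap X|$, and compare it with a reverse-Markov lower bound on that tail. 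This comparison is lossy, since you only retain mass $\lambda/2$ above $\lambda m/2$, and it is what forces the numerical check you deferred. That check does close with room to spare, so no fallback choice of $|X|$ is needed. For $m\ge4d/\lambda^2$, the derivative in $m$ of $d\log(em/d)-(\lambda m/2)\log8$ is at most $\lambda^2/4-(\lambda/2)\log8<0$, so the ceiling is harmless. Writing $u=1/\lambda\ge1$, the left side is then at most $(4eu^28^{-2u})^d\le4eu^28^{-2u}$, because $4eu^28^{-2u}$ is decreasing in $u$ and equals $e/16<1$ at $u=1$. Finally, $4eu^28^{-2u}<\lambda/2$ is equivalent to $8eu^38^{-2u}<1$. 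This holds because the quantity equals $e/8$ at $u=1$ and its logarithm has derivative $3/u-2\log8<0$. In short, the paper's Jensen step gives a direct inequality with no contradiction, no choice of subset size, and no calculus. Your version uses only first-moment tail bounds, at the cost of a weaker comparison and an explicit numerical verification.
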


\begin{proof}
Fix $E\subseteq U$. If $S\cap E=A$ and $A\ne\varnothing$, then $A\subseteq S$, so spreadness gives $\Pr_S[S\cap E=A]\le k^{-|A|}$; for $A=\varnothing$, the same bound is trivial. Since there are at most $\Pi_d(|E|)$ possible traces on $E$, Jensen's inequality gives
\begin{equation*}
k^{\E_S|S\cap E|}\le \E_S k^{|S\cap E|}
=\sum_{A\in\F|_E}k^{|A|}\Pr_S[S\cap E=A]
\le\Pi_d(|E|).
\end{equation*}
Taking logarithms and setting $E=U_\lambda$, if $|U_\lambda|\ge d$ then
\begin{equation*}
|U_\lambda|\lambda\log k
\le \E_S|S\cap U_\lambda|\log k
\le d\log\frac{e|U_\lambda|}{d}
\le2\sqrt{d|U_\lambda|},
\end{equation*}
where $1+\log u\le2\sqrt u$ for $u\ge1$. Since $\log k>1$, this implies \eqref{eq:frequent}. If $|U_\lambda|<d$, the same bound is immediate.
\end{proof}

We next combine the tail assumption on \(D\) with the VC bound to obtain a moment bound for \(|D\cap U_\lambda|\), which in turn controls the expected number of traces of \(\F\) on \(D\cap U_\lambda\).

\begin{claim}\label{claim2}
There is an absolute constant $C_1$ such that, for every $0<\lambda\le1$,
\begin{equation*}
\E_D\Pi_d(|D\cap U_\lambda|)
\le\left(C_1\left(1+\frac{\log(1/\lambda)}{\log(1/p)}\right)\right)^d.
\end{equation*}
\end{claim}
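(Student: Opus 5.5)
Set $E=U_\lambda$, $N=|E|\le 4d/\lambda^2$ by Claim~\ref{claim1}, and $X=|D\cap E|$. The plan is to bound $\E_D\Pi_d(X)$ by splitting according to whether $X$ is at most a threshold $r_0$ or larger. Below the threshold we use monotonicity of $\Pi_d$. Above it we use the tail assumption \eqref{eq:trace-tail}, which gives $\Pr[X\ge r]\le\Pi_d(N)p^r$. The only other input is the estimate \eqref{eq:sauer}.

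Concretely, fix an integer threshold $r_0\ge d$, to be chosen. Then
\begin{equation*}
\E_D\Pi_d(X)\le \Pi_d(r_0)+\sum_{r> r_0}\Pi_d(r)\Pr_D[X= r]\le \Pi_d(r_0)+\Pi_d(N)\sum_{r>r_0}\Pi_d(r)p^{r}.
\end{equation*}
Here $\Pi_d(r_0)\le(er_0/d)^d$. For the tail sum we use $\Pi_d(r)\le(er/d)^d$ and $p\le k^{-1/8}\le 8^{-1/8}<1$. The summand $(er/d)^dp^r$ therefore decays geometrically once $r\gtrsim d/\log(1/p)$; note that $\log(1/p)\ge(\log 8)/8$ is bounded below. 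We take
\begin{equation*}
r_0=\Big\lceil C d\Big(1+\frac{\log(1/\lambda)}{\log(1/p)}\Big)\Big\rceil
\end{equation*}
with a large absolute constant $C$. Then the first term is $(C'(1+\log(1/\lambda)/\log(1/p)))^d$, which is the target.

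It remains to show the tail term is at most $1$, say. Using \eqref{eq:sauer} and $N\le 4d/\lambda^2$ gives $\Pi_d(N)\le(e(1+4/\lambda^2))^d\le (15/\lambda^2)^d$. For $r\ge r_0$, the factor $p^{r/2}$ beats this, since
\begin{equation*}
p^{r_0/2}\le \exp\Big(-\tfrac{C}{2}d\log(1/p)-\tfrac C2 d\log(1/\lambda)\Big).
\end{equation*}
This kills $(15/\lambda^2)^d$ once $C\ge 4+ 2\log 15/\log(8^{1/8})$. The remaining factor $p^{r/2}(er/d)^d$ is also small for $r\ge r_0$ with $C$ large: set $r=td$ with $t\ge C$, so this factor is $(et\,p^{t/2})^d$, and $et\,p^{t/2}\le et\,8^{-t/16}$. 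Its sum over $r$ is then bounded by a geometric series. Adding the two contributions completes the proof.

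The main obstacle is the bookkeeping in the exponent. We need a single threshold $r_0$ that simultaneously compensates for the $\lambda^{-2d}$ from $\Pi_d(N)$ and for the polynomial growth of $\Pi_d(r)$, using only the weak lower bound $\log(1/p)\ge\frac18\log 8$. The term $\log(1/\lambda)/\log(1/p)$ in $r_0$ is exactly what converts $p^{r_0}$ into a power of $\lambda$. For small $r_0<d$ the plan is unchanged, since $r_0\ge d$ is enforced by $C\ge1$.
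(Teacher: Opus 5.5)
Your proof is correct and follows essentially the paper's route. You split $|D\cap U_\lambda|$ at a threshold of order $d+\log\Pi_d(|U_\lambda|)/\log(1/p)$, bound the part below the threshold by monotonicity of $\Pi_d$ and Sauer--Shelah, and control the part above it with the tail assumption \eqref{eq:trace-tail} and Claim~\ref{claim1}. The only difference is bookkeeping: the paper takes the threshold exactly where $\Pi_d(|E|)p^r$ drops below $1$ and bounds $\E_D(|D\cap E|+d)^d$ by a binomial expansion, which yields the general estimate \eqref{eq:trace-complexity} for arbitrary $E$; you instead enlarge the threshold by a constant factor and use half of $p^r$ to absorb $\Pi_d(|U_\lambda|)$ and the other half to absorb $\Pi_d(r)$, which is equally valid.
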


\begin{proof}
The case $E=\varnothing$ is trivial, so fix a nonempty set $E\subseteq U$ and set $r_0=\lceil\log\Pi_d(|E|)/\log(1/p)\rceil$. By \eqref{eq:trace-tail}, for every integer $s\ge0$,
\begin{equation*}
\Pr_D[|D\cap E|\ge r_0+s]
\le \Pi_d(|E|)p^{r_0+s}
\le p^s.
\end{equation*}
Thus, beyond \(r_0\), the probability that \(|D\cap E|\) exceeds \(r_0+s\) decays geometrically in \(s\).

Since $p\le k^{-1/8}$ and $k\ge8$, we have $p^s\le e^{-(\log8)s/8}$. Splitting according to whether $|D\cap E|<r_0$ and then summing over the possible values above $r_0$, the preceding tail bound gives
\begin{align*}
\E_D(|D\cap E|+d)^d
&\le (r_0+d)^d+\sum_{s\ge0}(r_0+d+s)^d e^{-(\log8)s/8}\\
&\le (r_0+d)^d+\sum_{j=0}^d\binom dj(r_0+d)^{d-j}
       \sum_{s\ge0}s^j e^{-(\log8)s/8}\\
&\le \bigl(C(r_0+d)\bigr)^d.
\end{align*}
Indeed, $\sum_{s\ge0}s^j e^{-(\log8)s/8}\le C^{j+1}j!\le C^{j+1}d^j$ for $0\le j\le d$, while $d^j\le(r_0+d)^j$.

Using \eqref{eq:sauer}, we therefore obtain
\begin{align}
\E_D\Pi_d(|D\cap E|)
&\le\left(\frac ed\right)^d\E_D(|D\cap E|+d)^d
\le\left(C\left(1+\frac{r_0}{d}\right)\right)^d \notag\\
&\le\left(C\left(1+\frac{\log\Pi_d(|E|)}{d\log(1/p)}\right)\right)^d,
\label{eq:trace-complexity}
\end{align}
where the last inequality uses $r_0\le1+\log\Pi_d(|E|)/\log(1/p)$ and $\log(1/p)\ge(\log8)/8$.

We now take $E=U_\lambda$. By Claim~\ref{claim1} and \eqref{eq:sauer}, we have
\begin{equation*}
1+\frac{|U_\lambda|}{d}\le\frac5{\lambda^2},
\qquad
\frac1d\log\Pi_d(|U_\lambda|)\le1+\log5+2\log(1/\lambda).
\end{equation*}
Substituting this into \eqref{eq:trace-complexity}, and using again that $\log(1/p)\ge(\log8)/8$, proves the claim after adjusting the absolute constant $C_1$.
\end{proof}

For fixed \(D\) and \(A\in\F|_D\), the event $S\cap D=A$ implies $A\subseteq S$, and hence $w_D(A)\le k^{-|A|}$. Moreover, if $w_D(A)\ge\lambda$, then every $x\in A$ satisfies $\Pr_S[x\in S]\ge w_D(A)\ge\lambda$, so $A\subseteq U_\lambda$. If $A=S\cap D$, this inclusion gives $A=S\cap(D\cap U_\lambda)$. Distinct such \(A\) are therefore distinct traces on \(D\cap U_\lambda\), so
\begin{equation}\label{eq:weighted-count}
\bigl|\{A\in\F|_D:w_D(A)\ge\lambda\}\bigr|
\le\Pi_d(|D\cap U_\lambda|).
\end{equation}

Unlike~\cite{GWXZ}, which groups points by their marginal probabilities, we group traces by \(w_D(A)\). Together with \eqref{eq:weighted-count} and Claim~\ref{claim2}, this controls the total probability of traces of each size.

Fix an integer $a\ge1$. For $j\ge0$, let
\begin{equation*}
\mathcal A_j
=\{A\in\F|_D:|A|=a,\ k^{-a}e^{-(j+1)}<w_D(A)\le k^{-a}e^{-j}\}.
\end{equation*}
Since $w_D(A)\le k^{-a}$ whenever $|A|=a$, the following two inequalities hold
\begin{align*}
\sum_{A\in\F|_D,\,|A|=a}w_D(A)&\le k^{-a}\sum_{j\ge0}e^{-j}|\mathcal A_j|,\\
|\mathcal A_j|&\le\bigl|\{A\in\F|_D:w_D(A)\ge k^{-a}e^{-(j+1)}\}\bigr|.
\end{align*}
Hence, by \eqref{eq:weighted-count} and Claim~\ref{claim2},
\begin{equation*}
\E_D\sum_{A\in\F|_D,\,|A|=a}w_D(A)
\le C_1^d k^{-a}\sum_{j\ge0}e^{-j}
\left(1+\frac{a\log k+j+1}{\log(1/p)}\right)^d.
\end{equation*}
Since $\log k/\log(1/p)\le8$ and $1/\log(1/p)\le8/\log8$, this sum is at most $C^d\sum_{j\ge0}e^{-j}(a+j+1)^d$. By the binomial theorem,
\begin{align*}
\sum_{j\ge0}e^{-j}(a+j+1)^d
&=\sum_{r=0}^d\binom dr a^{d-r}\sum_{j\ge0}e^{-j}(j+1)^r\\
&\le e^2\sum_{r=0}^d\binom dr a^{d-r}d^r
=e^2(a+d)^d
\le(C(a+d))^d,
\end{align*}
where we used $\sum_{j\ge0}e^{-j}(j+1)^r\le e^2r!\le e^2d^r$ for $0\le r\le d$. Hence $\E_D\sum_{A\in\F|_D,\,|A|=a}w_D(A)\le (C(a+d))^d k^{-a}$. Since $(a+d)^d=d^d(1+a/d)^d\le d^d2^{d+a}=(2d)^d2^a$, we obtain
\begin{equation*}
\E_D\sum_{A\in\F|_D,\,|A|=a}w_D(A)
\le(Cd)^d\left(\frac2k\right)^a.
\end{equation*}
Choosing $C_{\mathrm{tr}}$ sufficiently large proves \eqref{eq:weighted-trace}.

Finally, by \eqref{eq:weighted-trace} and $\binom am\le2^a$, we have
\begin{align*}
\E_{S,D}\binom{|S\cap D|}{m}
&=\sum_{a\ge m}\binom am\Pr_{S,D}[|S\cap D|=a]
\le (C_{\mathrm{tr}}d)^d\sum_{a\ge m}\binom am\left(\frac2k\right)^a\\
&\le (C_{\mathrm{tr}}d)^d\sum_{a\ge m}\left(\frac4k\right)^a
\le 2(C_{\mathrm{tr}}d)^d\left(\frac4k\right)^m.
\end{align*}
Since $d\ge1$, enlarging $C_{\mathrm{tr}}$ by an absolute factor absorbs the last factor $2$ and proves \eqref{eq:intersection-bound}.
\end{proof}

We fix such an absolute constant $C_{\mathrm{tr}}$ for the remainder of the paper.

\subsection{Common intersections and independent samples}

We first verify \eqref{eq:trace-tail} when the auxiliary set is a common intersection. Let $Z\subseteq U$. If $Z$ contains at least one member of $\F$, define its \emph{common intersection} by
\begin{equation*}
C(Z)=\bigcap_{F\in\F,\,F\subseteq Z}F;
\end{equation*}
if $Z$ contains no member of $\F$, set $C(Z)=\varnothing$. Thus $C(Z)$ consists of the points that are forced to lie in every member of $\F$ contained in $Z$.

\begin{corollary}\label{cor:common-intersection}
Let $k\ge8$, let $d\ge1$ be an integer, and let $\mu$ be a $k$-spread probability measure on $2^U$ whose support $\F$ satisfies $\VC(\F)\le d$. Let $S\sim\mu$, and let $Z\subseteq U$ be a random set independent of $S$. Suppose that, for some $0<p\le k^{-1/8}$,
\begin{equation}\label{eq:containment}
\Pr_Z[B\subseteq Z]\le p^{|B|}
\qquad\text{for every }B\subseteq U.
\end{equation}
Then, for every integer $m\ge1$,
\begin{equation}\label{eq:common-intersection}
\E_{S,Z}\binom{|S\cap C(Z)|}{m}
\le(C_{\mathrm{tr}}d)^d\left(\frac4k\right)^m.
\end{equation}
\end{corollary}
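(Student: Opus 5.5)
We apply Proposition~\ref{prop:weighted-trace} with $D=C(Z)$. Since $D$ is a function of $Z$, it is independent of $S$, and the hypothesis $0<p\le k^{-1/8}$ is the same. So the whole task is to verify the tail condition \eqref{eq:trace-tail}: for every $E\subseteq U$ and every integer $r\ge1$,
\begin{equation*}
\Pr_Z\bigl[|C(Z)\cap E|\ge r\bigr]\le\Pi_d(|E|)p^r .
\end{equation*}
Once this holds, \eqref{eq:intersection-bound} gives \eqref{eq:common-intersection} directly.

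To verify the tail bound, fix $E$ and $r$, and suppose $|C(Z)\cap E|\ge r$. Then $Z$ contains some member of $\F$, so $C(Z)\ne\varnothing$ is a genuine intersection. Choose any $F\in\F$ with $F\subseteq Z$. Since $C(Z)\subseteq F$, the set $C(Z)\cap E$ is contained in the trace $A:=F\cap E\in\F|_E$, and $A\subseteq Z$. This alone is not enough, because $|A|$ could exceed $r$ by a lot while the union bound needs exactly $p^r$. The fix is to use the trace $A$ itself as the witness: $|A|\ge|C(Z)\cap E|\ge r$ and $A\subseteq Z$. So the event is contained in the event that some trace $A\in\F|_E$ with $|A|\ge r$ satisfies $A\subseteq Z$.

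Now take a union bound over $A\in\F|_E$, of which there are at most $\Pi_d(|E|)$ by Sauer--Shelah. By \eqref{eq:containment}, each such $A$ has $\Pr_Z[A\subseteq Z]\le p^{|A|}\le p^r$, since $|A|\ge r$ and $p\le1$. Hence
\begin{equation*}
\Pr_Z\bigl[|C(Z)\cap E|\ge r\bigr]\le\sum_{A\in\F|_E,\,|A|\ge r}p^{|A|}\le\Pi_d(|E|)p^r ,
\end{equation*}
which is exactly \eqref{eq:trace-tail}. Equivalently, one could take a fixed $r$-subset $B\subseteq A$, but no such subset is needed, since $p^{|A|}\le p^r$ already suffices.

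The main potential obstacle is ensuring that the count ranges over traces on $E$ rather than over all $r$-subsets of $E$. The latter would give $\binom{|E|}{r}$ instead of $\Pi_d(|E|)$, which is too weak. Choosing the witness to be a full trace $F\cap E$ of a member $F\subseteq Z$ resolves this, and the argument works for every $F$ chosen. The remaining hypotheses of Proposition~\ref{prop:weighted-trace} ($k\ge8$, $d\ge1$, $\mu$ being $k$-spread with $\VC(\F)\le d$) are inherited verbatim.
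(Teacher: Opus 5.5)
Your proposal is correct and is essentially the paper's proof. You apply Proposition~\ref{prop:weighted-trace} with $D=C(Z)$, and you verify \eqref{eq:trace-tail} by observing that any $F\in\F$ with $F\subseteq Z$ gives a trace $F\cap E\supseteq C(Z)\cap E$ of size at least $r$ inside $Z$, then union-bounding over the at most $\Pi_d(|E|)$ traces using $p^{|A|}\le p^r$. The aside in your second paragraph that ``this alone is not enough'' is unnecessary, since the witness $A=F\cap E$ you already chose is exactly what the union bound uses.
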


\begin{proof}
Fix $E\subseteq U$ and an integer $r\ge1$. If $|C(Z)\cap E|\ge r$, then $Z$ contains a member of $\F$, and every such member $F\subseteq Z$ contains $C(Z)$. Hence $|F\cap E|\ge r$, so some trace $A\in\F|_E$ with $|A|\ge r$ is contained in $Z$. Therefore, by \eqref{eq:containment} and the Sauer--Shelah bound,
\begin{align*}
\Pr_Z[|C(Z)\cap E|\ge r]
&\le \sum_{A\in\F|_E,\,|A|\ge r}\Pr_Z[A\subseteq Z]
\le \sum_{A\in\F|_E,\,|A|\ge r}p^{|A|}\\
&\le |\F|_E|\,p^r
\le \Pi_d(|E|)p^r.
\end{align*}
Thus \eqref{eq:trace-tail} holds with $D=C(Z)$. Since $C(Z)$ is determined by $Z$, it is independent of $S$, and Proposition~\ref{prop:weighted-trace} gives \eqref{eq:common-intersection}.
\end{proof}

We next consider the case in which the auxiliary set is an independent copy of $S$.

\begin{corollary}\label{cor:independent-intersection}
Let $k\ge8$, let $d\ge1$ be an integer, and let $\mu$ be a $k$-spread probability measure on $2^U$ whose support $\F$ satisfies $\VC(\F)\le d$. Let $S,R\sim\mu$ be independent. Then, for every integer $m\ge1$,
\begin{equation*}
\sum_{T\subseteq U,\,|T|=m}\Pr_S[T\subseteq S]^2
=\E_{S,R}\binom{|S\cap R|}{m}
\le(C_{\mathrm{tr}}d)^d\left(\frac4k\right)^m.
\end{equation*}
\end{corollary}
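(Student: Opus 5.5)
The plan is to apply Proposition~\ref{prop:weighted-trace} with $D=R$, an independent copy of $S$, and $p=k^{-1/8}$. Two things need checking: the identity between the sum of squares and the binomial moment, and the tail hypothesis \eqref{eq:trace-tail}.

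The identity is a double-counting step. We have $\binom{|S\cap R|}{m}=\sum_{|T|=m}\mathbf 1[T\subseteq S]\,\mathbf 1[T\subseteq R]$. Taking expectations and using the independence of $S$ and $R$ gives $\E_{S,R}\binom{|S\cap R|}{m}=\sum_{|T|=m}\Pr_S[T\subseteq S]\Pr_R[T\subseteq R]=\sum_{|T|=m}\Pr_S[T\subseteq S]^2$, since $S$ and $R$ have the same law.

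For the tail hypothesis, fix $E\subseteq U$ and an integer $r\ge1$. If $|R\cap E|\ge r$, then the trace $A=R\cap E\in\F|_E$ satisfies $|A|\ge r$ and $A\subseteq R$. A union bound over traces, combined with spreadness and Sauer--Shelah, gives
\begin{equation*}
\Pr_R[|R\cap E|\ge r]\le\sum_{A\in\F|_E,\,|A|\ge r}\Pr_R[A\subseteq R]\le\sum_{A\in\F|_E,\,|A|\ge r}k^{-|A|}\le\Pi_d(|E|)\,k^{-r}.
\end{equation*}
This is the same argument as in Corollary~\ref{cor:common-intersection}, with spreadness in place of \eqref{eq:containment}. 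Since $k\ge8>1$ and $r\ge1$, we have $k^{-r}\le(k^{-1/8})^r$, so \eqref{eq:trace-tail} holds with $p=k^{-1/8}$. This choice also satisfies the constraint $0<p\le k^{-1/8}$ of Proposition~\ref{prop:weighted-trace}.

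The set $R$ is independent of $S$, so \eqref{eq:intersection-bound} applies directly and yields the claimed bound $(C_{\mathrm{tr}}d)^d(4/k)^m$. There is no real obstacle here. The only point needing care is that the tail estimate is stated for the trace of $R$ on an arbitrary $E$, rather than only on the support, and the union bound over $\F|_E$ handles this.
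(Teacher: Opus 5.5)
Your proposal is correct and follows the paper's proof: the same double-counting identity, the same union bound over traces combined with spreadness and Sauer--Shelah, and then Proposition~\ref{prop:weighted-trace} applied with $D=R$. The only difference is cosmetic: the paper verifies \eqref{eq:trace-tail} with $p=1/k$ rather than $p=k^{-1/8}$. This is immaterial, since both choices satisfy $p\le k^{-1/8}$ and the bound \eqref{eq:intersection-bound} does not depend on $p$.
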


\begin{proof}
Fix $E\subseteq U$ and an integer $r\ge1$. Since $R\cap E$ is always a trace of $\F$ on $E$, spreadness and the Sauer--Shelah bound give
\begin{align*}
\Pr_R[|R\cap E|\ge r]
&=\sum_{A\in\F|_E,\,|A|\ge r}\Pr_R[R\cap E=A]
\le\sum_{A\in\F|_E,\,|A|\ge r}\Pr_R[A\subseteq R]\\
&\le\sum_{A\in\F|_E,\,|A|\ge r}k^{-|A|}
\le |\F|_E|\,k^{-r}
\le \Pi_d(|E|)k^{-r}.
\end{align*}
Thus \eqref{eq:trace-tail} holds with $D=R$ and $p=1/k$, so Proposition~\ref{prop:weighted-trace} gives $\E_{S,R}\binom{|S\cap R|}{m}\le(C_{\mathrm{tr}}d)^d(4/k)^m$. 

Finally, counting the \(m\)-subsets of \(S\cap R\) and using independence gives
\begin{align*}
\E_{S,R}\binom{|S\cap R|}{m}
&=\sum_{T\subseteq U,\,|T|=m}\Pr_{S,R}[T\subseteq S\cap R]\\
&=\sum_{T\subseteq U,\,|T|=m}\Pr_S[T\subseteq S]\Pr_R[T\subseteq R]
=\sum_{T\subseteq U,\,|T|=m}\Pr_S[T\subseteq S]^2.
\end{align*}
\end{proof}

\section{The bounded-VC spread theorem}\label{sec:fragments}



We prove Theorem~\ref{thm:spread} using two independent random exposures. The first maps a sampled member $S\in\F$ to a minimum fragment $M(S,V)$ outside a random set $V$. The common-intersection estimate from Section~\ref{sec:weighted-traces} gives an exponential tail for the fragment size, and conditioning on short fragments yields a $(k/2)$-spread probability measure supported on a bounded-size family. Bell's theorem~\cite{Bell} for bounded-size families (see Lemma~\ref{lem:bell}), applied to a second random exposure, then ensures that the union of the two exposures contains a member of \(\F\).

The minimum-fragment setup is closely related to the framework used by Balogh et al.~\cite{BBDFP} in their proof of the bounded-VC threshold theorem, building on the minimum-fragment method of Park and Pham~\cite{PP} (see also~\cite{Lovett}). Their argument controls large fragments by a VC-dimension double-counting estimate and then proceeds by induction on the maximum member size. Here the common-intersection estimate gives, in one step, fragments whose size depends only on $d$ and the desired error probability, so no induction on the original maximum member size is needed.

Let $\F\subseteq2^U$ be nonempty, and fix a total order on $\F$. For $S\in\F$ and $V\subseteq U$, choose a member $F\in\F$ with $F\subseteq S\cup V$ minimizing $|F\setminus V|$, breaking ties according to this order, and define its \emph{minimum fragment} by
\begin{equation*}
M(S,V)=F\setminus V.
\end{equation*}
The choice exists because $S$ itself is eligible. The two properties we will use are $M(S,V)\subseteq S\setminus V$ and $F\subseteq V\cup M(S,V)$. Thus $M(S,V)$ is the part of $F$ not already contained in $V$: if a second random set contains $M(S,V)$, then its union with $V$ contains $F$, and hence a member of $\F$. 

The next lemma analyzes the first random exposure $V\sim\Bin(U,16/k)$. As a key consequence of the common-intersection estimate from Section~\ref{sec:weighted-traces}, it gives an exponential tail for the size of the resulting minimum fragment.

\begin{lemma}\label{lem:fragment}
    Let $\mu$ be a $k$-spread probability measure on $2^U$ with support $\F$ of VC dimension at most $d$, where $k\ge34$ and $d\ge1$ is an integer, and let $S\sim\mu$. If $V\sim\Bin(U,16/k)$ is independent of $S$, then, for every integer $t\ge1$,
\begin{equation*}
    \Pr_{S,V}[|M(S,V)|\ge t]\le\frac43(C_{\mathrm{tr}}d)^d4^{-t}.
\end{equation*}
\end{lemma}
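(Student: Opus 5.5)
The plan is to pass from the pair $(S,V)$ to the pair $(S,Z)$ with $Z=V\cup M(S,V)$, and then to apply Corollary~\ref{cor:common-intersection} to a binomial random set $Z'\sim\Bin(U,16/k)$. This is the usual Park--Pham style change of measure. Write $p=16/k$. The hypothesis $p\le k^{-1/8}$ of Corollary~\ref{cor:common-intersection} is equivalent to $k^{7/8}\ge16$, which holds for $k\ge34$. The containment condition \eqref{eq:containment} holds trivially for $Z'$, with equality $\Pr[B\subseteq Z']=p^{|B|}$.

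\emph{Step 1: a structural fact.} Let $M=M(S,V)$, and let $Z=V\cup M$. I claim that $M\subseteq C(Z)$. The set $Z$ contains the chosen $F$, so $C(Z)$ is a genuine intersection. Take any $F'\in\F$ with $F'\subseteq Z$. Since $M\subseteq S$, we have $F'\subseteq S\cup V$, so $F'$ was eligible in the definition of $M(S,V)$. Minimality then gives $|F'\setminus V|\ge|M|$. On the other hand, $F'\setminus V\subseteq M$, so $F'\setminus V=M$ and hence $M\subseteq F'$. Taking the intersection over all such $F'$ gives $M\subseteq C(Z)$. Since also $M\subseteq S$, we get $M\subseteq S\cap C(Z)$, and $M$ is disjoint from $V=Z\setminus M$.

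\emph{Step 2: change of measure.} Fix an integer $s\ge1$. By Step~1, the pair $(S,V)$ is recovered from $S$, the set $Z=V\cup M$, and an $s$-subset $M$ of $S\cap C(Z)$, via $V=Z\setminus M$. Hence
\begin{equation*}
\Pr_{S,V}[|M(S,V)|=s]\le\sum_{S}\mu(S)\sum_{Z}\ \sum_{\substack{M\subseteq S\cap C(Z)\\ |M|=s}}\Pr[V=Z\setminus M].
\end{equation*}
For $V\sim\Bin(U,p)$ and $M\subseteq Z$ with $|M|=s$, we have
\begin{equation*}
\Pr[V=Z\setminus M]=\Pr[Z'=Z]\left(\frac{1-p}{p}\right)^s\le\left(\frac k{16}\right)^s\Pr[Z'=Z].
\end{equation*}
It follows that
\begin{equation*}
\Pr_{S,V}[|M(S,V)|=s]\le\left(\frac k{16}\right)^s\E_{S,Z'}\binom{|S\cap C(Z')|}{s}.
\end{equation*}
By Corollary~\ref{cor:common-intersection}, this is at most
\begin{equation*}
\left(\frac k{16}\right)^s(C_{\mathrm{tr}}d)^d\left(\frac4k\right)^s=(C_{\mathrm{tr}}d)^d4^{-s}.
\end{equation*}

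\emph{Step 3: summation.} Summing over $s\ge t$ gives
\begin{equation*}
\Pr_{S,V}[|M(S,V)|\ge t]\le(C_{\mathrm{tr}}d)^d\sum_{s\ge t}4^{-s}=\frac43(C_{\mathrm{tr}}d)^d4^{-t},
\end{equation*}
which is exactly the stated bound.

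The main obstacle is Step~1 together with the bookkeeping in Step~2. One must check that $M$ really lies in the common intersection of the enlarged set $Z$, not merely in the chosen $F$; this is where minimality of $|F\setminus V|$ is used. One must also check that the map $(S,V)\mapsto(S,Z,M)$ is injective. Everything else is arithmetic: the factor $(k/16)^s$ cancels against $(4/k)^s$ to leave $4^{-s}$.
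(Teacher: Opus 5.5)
Your proposal is correct and matches the paper's proof step for step. It uses the same observation $M(S,V)\subseteq S\cap C(Z)$ with $Z=V\cup M(S,V)$, the same change of measure costing a factor $\bigl((1-\rho)/\rho\bigr)^s\le(k/16)^s$, Corollary~\ref{cor:common-intersection} with $p=16/k\le k^{-1/8}$, and the same geometric summation over $s\ge t$.
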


\begin{proof}
Put $\rho=16/k$. Given $S,V$, set $Z=V\cup M(S,V)$. Let $F\in\F$ be the member chosen in the definition of $M(S,V)$, so that $M(S,V)=F\setminus V$. It follows that $F\subseteq Z$.

Now let $F'\in\F$ be any member with $F'\subseteq Z$. Since $F'\subseteq Z\subseteq S\cup V$, the set $F'$ is eligible in the definition of $M(S,V)$, while $F'\setminus V\subseteq Z\setminus V=M(S,V)$. By minimality, we must have $F'\setminus V=M(S,V)$. Thus every member of $\F$ contained in $Z$ contains $M(S,V)$. By the definition of the common intersection $C(Z)$, we have
\begin{equation*}
M(S,V)\subseteq S\cap C(Z),\qquad V=Z\setminus M(S,V).
\end{equation*}

Fix an integer $m\ge1$. Expanding the probability according to the choices of $S$ and $V$ gives
\begin{equation*}
\Pr_{S,V}[|M(S,V)|=m]=\sum_{S\in\F}\mu(S)\sum_{V\subseteq U,\, |M(S,V)|=m}\Pr[V],
\end{equation*}
where, as usual, \(V\) denotes both the random set and a realization of it.
For each pair $(S,V)$ occurring in this sum, set $Z=V\cup M(S,V)$. Then $M(S,V)\subseteq S\cap C(Z)$, and $V$ is uniquely determined by $Z$ and $M(S,V)$ through $V=Z\setminus M(S,V)$. Therefore, for fixed $S$ and $Z$, the possible fragments of size $m$ are among the $m$-subsets of $S\cap C(Z)$. Enlarging the sum to all such subsets gives
\begin{equation*}
\Pr_{S,V}[|M(S,V)|=m]\le\sum_{S\in\F}\mu(S)\sum_{Z\subseteq U}\sum_{A\subseteq S\cap C(Z),\, |A|=m}\Pr_V[V=Z\setminus A].
\end{equation*}

For every $A\subseteq Z$ with $|A|=m$, since $V\sim\Bin(U,\rho)$,
\begin{equation*}
\Pr_V[V=Z\setminus A]=\rho^{|Z|-m}(1-\rho)^{|U|-|Z|+m}=\left(\frac{1-\rho}{\rho}\right)^m\rho^{|Z|}(1-\rho)^{|U|-|Z|}.
\end{equation*}
The product $\rho^{|Z|}(1-\rho)^{|U|-|Z|}$ is precisely the probability that a $\Bin(U,\rho)$ random set is equal to $Z$. Hence
\begin{align*}
\Pr_{S,V}[|M(S,V)|=m]
&\le\left(\frac{1-\rho}{\rho}\right)^m\sum_{S\in\F}\mu(S)\sum_{Z\subseteq U}\rho^{|Z|}(1-\rho)^{|U|-|Z|}\binom{|S\cap C(Z)|}{m}\\
&=\left(\frac{1-\rho}{\rho}\right)^m\E_{S,Z}\binom{|S\cap C(Z)|}{m},
\end{align*}
where on the right $S\sim\mu$ and $Z\sim\Bin(U,\rho)$ are independent.

Since $Z\sim\Bin(U,\rho)$, we have $\Pr_Z[B\subseteq Z]=\rho^{|B|}$ for every $B\subseteq U$. Moreover, $\rho=16/k\le k^{-1/8}$ when $k\ge34$. Hence Corollary~\ref{cor:common-intersection}, applied with $p=\rho$, gives
\begin{equation*}
\E_{S,Z}\binom{|S\cap C(Z)|}{m}\le(C_{\mathrm{tr}}d)^d\left(\frac4k\right)^m.
\end{equation*}
Since $(1-\rho)/\rho\le1/\rho=k/16$, it follows that
\begin{align*}
\Pr_{S,V}[|M(S,V)|=m]
&\le\left(\frac{1-\rho}{\rho}\right)^m(C_{\mathrm{tr}}d)^d\left(\frac4k\right)^m\\
&\le(C_{\mathrm{tr}}d)^d\left(\frac{k}{16}\cdot\frac4k\right)^m=(C_{\mathrm{tr}}d)^d4^{-m}.
\end{align*}
Finally, summing over $m\ge t$ gives
\begin{equation*}
\Pr_{S,V}[|M(S,V)|\ge t]\le(C_{\mathrm{tr}}d)^d\sum_{m\ge t}4^{-m}=\frac43(C_{\mathrm{tr}}d)^d4^{-t},
\end{equation*}
as required.
\end{proof}

We next use the following form of Bell's theorem for bounded-size families. Recall that, for \(0<a\le1\), a family \(\mathcal H\subseteq2^U\) is \(a\)-small if it admits a cover \(\mathcal A\) satisfying \(\sum_{A\in\mathcal A}a^{|A|}\le1/2\).

\begin{lemma}[{Bell~\cite[Theorem~3]{Bell}}]\label{lem:bell}
There is an absolute constant $c\ge2$ such that the following holds. Let $\mathcal H\subseteq2^U$ be $t$-bounded, where $t\ge1$, and let $0<a\le1$. Suppose that $\mathcal H$ is not $a$-small. For $0<\delta\le1/2$, set $p=\min\{1,ca\log(t/\delta)\}$. If $W\sim\Bin(U,p)$, then $\Pr_W[\exists A\in\mathcal H:A\subseteq W]\ge1-\delta$.
\end{lemma}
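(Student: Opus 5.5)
Since Lemma~\ref{lem:bell} is quoted from Bell~\cite{Bell}, the plan is to reprove it by iterating the Park--Pham minimum-fragment argument (the same $M(\cdot,\cdot)$ construction used above). The idea is to shrink the relevant family in rounds and to spend more density in the final rounds to obtain the $\log(1/\delta)$ dependence.

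\textbf{Round structure.} Write $W=W_1\cup\dots\cup W_J$ with independent $W_i\sim\Bin(U,L_ia)$. Let $\mathcal H_0=\mathcal H$ and $t_0=t$. In round $i$, replace each $A\in\mathcal H_{i-1}$ by $M(A,W_i)$, computed with respect to $\mathcal H_{i-1}$, and keep only the short fragments:
\begin{equation*}
\mathcal H_i=\{M(A,W_i):A\in\mathcal H_{i-1},\ |M(A,W_i)|<t_{i-1}/2\}.
\end{equation*}
Then $t_i=\lceil t_{i-1}/2\rceil$, so $\mathcal H_i$ is $t_i$-bounded. By the property $F\subseteq W_i\cup M(A,W_i)$, any set containing a member of $\mathcal H_i$ becomes, after adding $W_1\cup\dots\cup W_i$, a set containing a member of $\mathcal H$. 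After $J\approx\log_2 t$ rounds the surviving fragments are empty or bounded, and a final exposure finishes the argument.

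\textbf{Key counting step.} In each round, let $\mathcal G_W=\{M(A,W):|M(A,W)|\ge s\}$ with $s=t_{i-1}/2$ and $W$ of density $w=La$. Then $\mathcal G_W\cup\mathcal H_i$ covers $\mathcal H_{i-1}$. To bound $\E_W\sum_{T\in\mathcal G_W}a^{|T|}$, encode each pair $(W,T)$ by $Z=W\cup T$ together with $T$. Here $T$ is a $j$-subset of the minimizing member $F\subseteq Z$, and $F$ is determined by $Z$ through the fixed tie-breaking order. There are at most $\binom{t}{j}$ choices of $T$, and $\Pr[W=Z\setminus T]=((1-w)/w)^j\Pr[\Bin(U,w)=Z]$. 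This gives
\begin{equation*}
\E_W\sum_{T\in\mathcal G_W}a^{|T|}\le\sum_{j\ge s}\binom{t_{i-1}}{j}L^{-j}\le (C/L)^{s}.
\end{equation*}
By Markov's inequality, with probability at least $1-4(C/L)^{s}$ the family $\mathcal G_W$ has weight at most $1/4$. Since $\mathcal H_{i-1}$ is not $a$-small, $\mathcal H_i$ is then not $a$-small, at least at a slightly adjusted smallness level. I would carry a sequence of smallness thresholds such as $1/2-\sum_{j<i}2^{-j-2}$ to absorb this loss cleanly. When $\mathcal H_{i-1}$ is not small, $\varnothing$ is never a member, because otherwise $\{\varnothing\}$ would be a cover of weight $1$; this is consistent, since the final event we need is that some $\mathcal H_J$ contains $\varnothing$ or is contained by the last exposure.

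\textbf{Choosing densities and the expected obstacle.} In the early rounds, where $t_{i-1}$ is large, a constant $L_i=L$ already makes $(C/L)^{t_{i-1}/2}$ tiny. The failure probabilities sum to at most about $\delta/2$ once $(C/L)^{t_{i-1}/2}\le\delta 2^{-i}$. In the late rounds, where $t_{i-1}=O(1)$, this forces $L_i\approx C\log(2^i/\delta)/t_{i-1}$. The main obstacle is arranging $\sum_i L_i=O(\log(t/\delta))$ rather than $O(\log t\cdot\log(1/\delta))$. My first attempt is to take $L_i=C\max\{1,\log(1/\delta)/t_{i-1}\}$. Because the $t_{i-1}$ halve, $\sum_i\log(1/\delta)/t_{i-1}$ is a geometric series dominated by its last term, which is $O(\log(1/\delta))$, and the number of rounds contributes $O(\log t)$. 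For the final $O(1)$-bounded, non-small family, I would add one last exposure of density $Ca\log(1/\delta)$ and conclude by the same counting with $s=1$. The total density is then at most $ca\log(t/\delta)$, and the monotonicity of containment in $p$ handles the cap at $1$.
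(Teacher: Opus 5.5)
The paper does not reprove this lemma. It imports Bell's Theorem~3 and only remarks on the trivial cases $p=1$ and $\varnothing\in\mathcal H$ and on the switch to natural logarithms. Your self-contained route through iterated Park--Pham fragments is reasonable, and the counting step is correct: $(W,T)\mapsto(Z,T)$ is injective, $F$ is the first member of the current family inside $Z$, and $\sum_{j\ge s}\binom bjL^{-j}\le(4/L)^s$ for $s\ge b/2$.

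\textbf{The gap: the $\log(1/\delta)$ dependence.} This dependence is exactly what Bell's theorem adds to Park--Pham, and the paper needs it, since it applies the lemma with $\delta=\varepsilon/2$. Solving $(C/L)^s\le\delta2^{-i}$ requires $\log(L/C)\ge\log(2^i/\delta)/s$, i.e.\ $L\ge C(2^i/\delta)^{1/s}$, not $L\approx C\log(2^i/\delta)/s$. For $s=O(1)$ this is polynomial in $1/\delta$. Since $2^i\approx t$ in the late rounds, it is polynomial in $t$ as well.

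With your choice $L_i=C\max\{1,\log(1/\delta)/t_{i-1}\}$, your Markov bound only certifies that the round with $t_{i-1}=2$ succeeds with probability $1-O(1/\log(1/\delta))$. The final exposure ``by the same counting with $s=1$'' has the same defect: a Markov bound on a cost whose mean is of order $1/L$ cannot certify failure probability below order $1/(L\eta)$. So no schedule in which every round must succeed with probability $1-\delta2^{-i}$ yields density $O(a\log(t/\delta))$.

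\textbf{A bookkeeping error.} Your thresholds satisfy $\tfrac12-\sum_{j<i}2^{-j-2}=2^{-i-1}$. After $\log_2t$ rounds you only know that every cover of the final family has $a$-cost above about $1/t$, which gives non-smallness only at scale about $ta$. The total loss must stay below a constant less than $1/2$.

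\textbf{The missing idea: discard bad exposures.} A round need only succeed with constant probability, because a bad exposure can simply be thrown away.
\begin{itemize}
\item Keep a current $b$-bounded family $\mathcal K$ all of whose covers have $a$-cost $>1/4$. Use independent exposures $W_1,\dots,W_N$, each of density $La$, with $L$ a large absolute constant.
\item In a round, if the fragments of size at least $\lceil b/2\rceil$ have $a$-cost at most $\eta_b=2^{-\lfloor\log_2b\rfloor}/16$, replace $\mathcal K$ by the shorter fragments. Otherwise leave $\mathcal K$ unchanged. The discarded $W_r$ is harmless by monotonicity, and the next exposure is fresh.
\item Your counting and Markov bound make each round bad with conditional probability at most $16b(4/L)^{b/2}\le\beta$, for a small absolute constant $\beta$.
\item Since $b$ more than halves at each good round, the total loss is at most $1/8$. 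After $\lfloor\log_2t\rfloor+1$ good rounds, $\mathcal K=\{\varnothing\}$; when $b\le2$, a good round forces an empty fragment.
\item Take $N=\max\{2(\lfloor\log_2t\rfloor+1),\lceil\log_2(1/\delta)\rceil\}$ and $\beta\le1/16$. Then $\Pr[\Bin(N,\beta)\ge N/2]\le(4\beta)^{N/2}\le\delta$, and the total density is $NLa=O(a\log(t/\delta))$.
\end{itemize}
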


The cases \(p=1\) and \(\varnothing\in\mathcal H\) are immediate. In the remaining case, the assertion follows from Bell's Theorem~3 using that \(\mathcal H\) is \(t\)-bounded and monotonicity, with the absolute constant adjusted for our use of natural logarithms.

Suppose that $\nu$ is an $h$-spread probability measure supported on $\mathcal H$, and let $S\sim\nu$. Then every cover $\mathcal A$ of $\mathcal H$ satisfies
\begin{equation*}
1\le\sum_{A\in\mathcal A}\Pr_S[A\subseteq S]\le\sum_{A\in\mathcal A}h^{-|A|}.
\end{equation*}
Thus $\mathcal H$ is not $(1/h)$-small, and Lemma~\ref{lem:bell} applies with $a=1/h$.

\begin{proof}[Proof of Theorem~\ref{thm:spread}]
If $p=1$, the assertion is immediate. Hence assume $p<1$. In this case $p=(C/k)\log((d+1)/\varepsilon)$, and since $\log((d+1)/\varepsilon)\ge\log4$, taking the absolute constant $C$ sufficiently large ensures that $k\ge34$.

\paragraph{First exposure: compression.} Choose
\begin{equation*}
t=\left\lceil\log_4\frac{16(C_{\mathrm{tr}}d)^d}{3\varepsilon}\right\rceil,
\end{equation*}
and let $S\sim\mu$ and $V\sim\Bin(U,16/k)$ be independent. By the choice of $t$, Lemma~\ref{lem:fragment} gives
\begin{equation*}
\E_V\Pr_S[|M(S,V)|\ge t]\le\frac43(C_{\mathrm{tr}}d)^d4^{-t}\le\frac{\varepsilon}{4}.
\end{equation*}
By Markov's inequality, $\Pr_S[|M(S,V)|<t]\ge1/2$ with probability at least $1-\varepsilon/2$ over $V$. Call such a $V$ \emph{good}.

Fix a good $V$. Condition the distribution of $S$ on the event $|M(S,V)|<t$ and consider the induced probability distribution of $M(S,V)$. Its support consists of sets of size below $t$, and hence is $t$-bounded. For every nonempty $T\subseteq U$,
\begin{equation*}
\Pr_S[T\subseteq M(S,V)\mid |M(S,V)|<t]
\le2\Pr_S[T\subseteq S]
\le2k^{-|T|}
\le(k/2)^{-|T|}.
\end{equation*}
Here we used $M(S,V)\subseteq S$ and the fact that the conditioning event has probability at least $1/2$.

\paragraph{Second exposure: completion.}
Set $p_1=\min\{1,(2c/k)\log(2t/\varepsilon)\}$, where $c$ is the constant in Lemma~\ref{lem:bell}, and draw $W\sim\Bin(U,p_1)$ independently of $V$. For each good $V$, the induced fragment family is $t$-bounded and supports a $(k/2)$-spread probability measure, and hence is not $(2/k)$-small. Lemma~\ref{lem:bell}, applied with $a=2/k$ and $\delta=\varepsilon/2$, therefore shows that $W$ contains one of these fragments with probability at least $1-\varepsilon/2$. If $W$ contains a fragment $A$ in the support of this measure, choose $S$ with $M(S,V)=A$. Then $V\cup W$ contains the member of $\F$ selected in the definition of $M(S,V)$.

The probability that $V$ is not good is at most $\varepsilon/2$, and, conditional on any good $V$, the second exposure fails with probability at most $\varepsilon/2$. Hence $V\cup W$ contains a member of $\F$ with probability at least $1-\varepsilon$.

Since $V$ and $W$ are independent product measures,
\begin{equation*}
V\cup W\sim\Bin\left(U,1-\left(1-\frac{16}{k}\right)(1-p_1)\right),
\end{equation*}
whose density is at most $16/k+p_1$. Since $t=O(d\log(d+1)+\log(1/\varepsilon))$, we have $\log(2t/\varepsilon)=O(\log((d+1)/\varepsilon))$. Hence, for an absolute constant $C_1$,
\begin{equation*}
\frac{16}{k}+p_1\le\frac{C_1}{k}\log\frac{d+1}{\varepsilon}.
\end{equation*}
Taking the constant $C$ in the theorem at least $C_1$, the density of $V\cup W$ is at most $p$. Since the event of containing a member of $\F$ is increasing, the same conclusion holds for $\Bin(U,p)$.
\end{proof}

\section{The expectation-threshold theorem}\label{sec:thresholds}

We now derive Theorem~\ref{thm:main} from the spread theorem. For \(T\subseteq U\), write \(\F_T=\{S\setminus T:S\in\F,\ T\subseteq S\}\) for the \emph{link} of \(\F\) at \(T\), viewed as a family on \(U\setminus T\). For \(a>0\) and a family \(\mathcal A\subseteq2^U\), we call \(\sum_{A\in\mathcal A}a^{|A|}\) the \emph{\(a\)-cost} of \(\mathcal A\). Thus, for \(0<a\le1\), a family is \(a\)-small exactly when it has a cover of \(a\)-cost at most \(1/2\).

The proof starts by setting \(a\) to be a suitable constant multiple of \(q(\F)\), so that \(\F\) has a quantitative obstruction to \(a\)-smallness. We then use two structural ingredients. The first extracts local spread structure from a maximum packing: either the packing already gives a spread probability measure on \(\F\), or its saturated constraints form a cover of \(\F\) and the link at every saturated set supports a spread probability measure. The second uses the independent-sample estimate from Section~\ref{sec:weighted-traces} to bound the number of saturated sets of each size. This implies that the large saturated sets have negligible \(a\)-cost, allowing us to discard them while retaining a bounded-size family that is still not \(a\)-small. Finally, one random exposure using Lemma~\ref{lem:bell} locates one of these bounded-size saturated sets, and a second exposure uses the spread measure on its link to extend it to a member of \(\F\).

The packing formulation below is closely related to the linear-programming connection between fractional covers and spread measures; see, for example,~\cite[Proposition~1.5]{FKNP}. A related use of tight constraints arising from a maximal solution appears in the container argument of Campos and Samotij~\cite[Proposition~1.1(ii)]{CamposSamotij}. For $k\ge1$, consider the linear program
\begin{equation}\label{eq:packing-lp}
\begin{aligned}
\text{maximize}\quad
& \sum_{S\in\F} z_S \\
\text{subject to}\quad
& \sum_{S\in\F,\,T\subseteq S} z_S\le k^{-|T|}
\qquad (T\subseteq U), \\
& z_S\ge0
\qquad (S\in\F).
\end{aligned}
\end{equation}
The feasible region is nonempty and compact, so an optimal solution exists. Let $(z_S)_{S\in\F}$ be an optimal solution, and write $\alpha=\sum_{S\in\F}z_S$ for its total weight. The constraint corresponding to $T=\varnothing$ gives $\alpha\le1$. For $T\subseteq U$, write $z(T)=\sum_{S\in\F,\,T\subseteq S}z_S$. We call $T$ \emph{saturated} if $z(T)=k^{-|T|}$, and write $\mathcal T=\{T\subseteq U:z(T)=k^{-|T|}\}$ for the family of saturated sets.

The next lemma shows that either the maximum packing already gives a global spread measure, or the saturated sets cover $\F$ and each of their links supports a spread measure. 

\begin{lemma}\label{lem:saturated-links}
With the notation above, if $\alpha=1$, the weights $z_S$ define a $k$-spread probability measure supported on $\F$. If $\alpha<1$, then $\mathcal T$ is a cover of $\F$. Moreover, for every $T\in\mathcal T$, if $S\in\F$ containing $T$ is chosen with probability $z_S/z(T)$, then the resulting distribution of $S\setminus T$ is a $k$-spread probability measure supported on $\F_T$.
\end{lemma}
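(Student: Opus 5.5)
The lemma has three parts, and I will prove them directly from optimality of the packing $(z_S)$.

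\emph{The case $\alpha=1$.} Put $\mu(S)=z_S$. This is a probability measure supported on $\{S:z_S>0\}\subseteq\F$. For nonempty $T$, feasibility gives
\begin{equation*}
\Pr_S[T\subseteq S]=z(T)\le k^{-|T|},
\end{equation*}
so $\mu$ is $k$-spread. Here ``supported on'' means the support lies in $\F$; that is all the later sections use.

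\emph{The links.} Fix $T\in\mathcal T$. Then $z(T)=k^{-|T|}>0$, so the conditional law is well defined. For nonempty $B\subseteq U\setminus T$,
\begin{equation*}
\Pr[B\subseteq S\setminus T]=\frac{z(T\cup B)}{z(T)}\le\frac{k^{-|T|-|B|}}{k^{-|T|}}=k^{-|B|}.
\end{equation*}
This uses only feasibility for $T\cup B$ and saturation of $T$. The support consists of sets $S\setminus T$ with $S\in\F$ and $T\subseteq S$, so it lies in $\F_T$.

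\emph{The case $\alpha<1$: $\mathcal T$ covers $\F$.} I argue by contradiction. Suppose some $S_0\in\F$ contains no saturated set. There are finitely many $T\subseteq S_0$, and each satisfies $z(T)<k^{-|T|}$, so the slack
\begin{equation*}
\eta=\min_{T\subseteq S_0}\bigl(k^{-|T|}-z(T)\bigr)
\end{equation*}
is positive. Raise $z_{S_0}$ by $\eta$. This changes only the constraints indexed by $T\subseteq S_0$, and each of them increases by exactly $\eta$, so feasibility is preserved. The objective, however, increases by $\eta>0$, contradicting the optimality of $(z_S)$.

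Hence every $S\in\F$ contains some $T\in\mathcal T$, so $\F\subseteq\langle\mathcal T\rangle$. In fact this argument does not use $\alpha<1$. That hypothesis matters only because it guarantees $\varnothing\notin\mathcal T$: the constraint for $T=\varnothing$ reads $z(\varnothing)=\alpha\le1$, and it is saturated exactly when $\alpha=1$. So when $\alpha<1$ the cover is nontrivial.

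There is no real obstacle. The only point needing care is the perturbation step, namely checking that adding weight to $z_{S_0}$ affects only the constraints for subsets of $S_0$.
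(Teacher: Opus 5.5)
Your proof is correct and follows the paper's argument step for step: feasibility gives spreadness when $\alpha=1$, raising $z_{S_0}$ into the positive slack contradicts optimality and shows that $\mathcal T$ covers $\F$, and the ratio $z(T\cup B)/z(T)\le k^{-|B|}$ handles the links. Your explicit increment $\eta$ and your remark that the covering argument does not need $\alpha<1$ are both accurate (that hypothesis only guarantees $\varnothing\notin\mathcal T$, which the paper uses afterwards).
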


\begin{proof}
If $\alpha=1$, the weights $z_S$ form a probability measure, and the constraints in \eqref{eq:packing-lp} say exactly that it is $k$-spread.

Suppose now that $\alpha<1$. Then $\varnothing$ is not saturated, so $\varnothing\notin\mathcal T$. We first show that $\mathcal T$ covers $\F$. Indeed, if some \(S_0\in\F\) contained no saturated set, every constraint indexed by a subset of \(S_0\) would have positive slack. Since there are finitely many such constraints, \(z_{S_0}\) could be increased slightly while preserving all the constraints, contradicting the optimality of $(z_S)_{S\in\F}$. 

Fix $T\in\mathcal T$. Since $z(T)=k^{-|T|}>0$, the indicated distribution is well defined. For every nonempty $A\subseteq U\setminus T$,
\begin{equation*}
\Pr[A\subseteq S\setminus T]
=\frac{z(T\cup A)}{z(T)}
\le\frac{k^{-|T|-|A|}}{k^{-|T|}}
=k^{-|A|}.
\end{equation*}
Thus the induced distribution on $\F_T$ is $k$-spread.
\end{proof}

The next lemma is the point at which bounded VC dimension enters the packing argument. It uses the independent-sample estimate of Corollary~\ref{cor:independent-intersection} to bound the number of saturated sets of each size.

\begin{lemma}\label{lem:saturated-truncation}
With the notation above, suppose that $k\ge8$ and $\VC(\F)\le d$, where $d\ge1$ is an integer, and set $B=(C_{\mathrm{tr}}(d+1))^{d+1}$. Then, for every integer $m\ge1$,
\begin{equation}\label{eq:saturated-count}
|\{T\in\mathcal T:|T|=m\}|
\le B(4k)^m.
\end{equation}
\end{lemma}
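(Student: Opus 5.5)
The idea is to turn the packing $(z_S)$ into a genuine $k$-spread probability measure and then apply Corollary~\ref{cor:independent-intersection}. Saturation gives a lower bound on each term $z(T)^2$, and that corollary gives an upper bound on $\sum_{|T|=m}z(T)^2$; comparing the two bounds the number of saturated sets. The difficulty is that $\alpha$ may be less than $1$. Rescaling by $1/\alpha$ would destroy spreadness, so instead I would put the missing mass on the empty set.

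Define a measure $\nu$ on $2^U$ by $\nu(S)=z_S$ for $S\in\F$ and $\nu(\varnothing)=1-\alpha$, adding this to $z_\varnothing$ if $\varnothing\in\F$. Since $\alpha\le1$, $\nu$ is a probability measure. For every nonempty $T$, the empty set does not contain $T$, so
\[
\Pr_{S\sim\nu}[T\subseteq S]=z(T)\le k^{-|T|}.
\]
Hence $\nu$ is $k$-spread.

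The support of $\nu$ is contained in $\F\cup\{\varnothing\}$. Adding a single set to a family raises its VC dimension by at most one: if $D$ is shattered by $\F\cup\{\varnothing\}$ and $x\in D$, then $D\setminus\{x\}$ is shattered by $\F$ alone, because every trace $A\subseteq D\setminus\{x\}$ can be realized as the restriction of a trace $A\cup\{x\}$, which is nonempty and so comes from $\F$. Thus the support has VC dimension at most $d+1$. Passing to a subfamily (sets with $z_S=0$) does not increase VC dimension.

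Now apply Corollary~\ref{cor:independent-intersection} to $\nu$ with $d+1$ in place of $d$; this is valid since $k\ge8$. For every $m\ge1$ it gives
\[
\sum_{|T|=m}z(T)^2=\sum_{|T|=m}\Pr_\nu[T\subseteq S]^2\le (C_{\mathrm{tr}}(d+1))^{d+1}(4/k)^m=B(4/k)^m .
\]
Each saturated $T$ of size $m$ contributes exactly $z(T)^2=k^{-2m}$ to the left-hand side. Therefore
\[
|\{T\in\mathcal T:|T|=m\}|\,k^{-2m}\le B(4/k)^m,
\]
which rearranges to \eqref{eq:saturated-count}.

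The main obstacle is the case $\alpha<1$, which is exactly when the lemma is used. The padding by $\varnothing$ handles it, and the price is the shift $d\to d+1$, which accounts for the form of $B$.
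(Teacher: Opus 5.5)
Your proof is correct and is the paper's argument: you pad the packing with mass $1-\alpha$ at $\varnothing$ to get a $k$-spread probability measure whose support has VC dimension at most $d+1$, apply Corollary~\ref{cor:independent-intersection} with $d+1$ in place of $d$, and count each saturated $m$-set as contributing $k^{-2m}$. Your explicit check that adjoining $\varnothing$ raises the VC dimension by at most one fills in a step the paper only asserts.
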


\begin{proof}
Add the missing mass $1-\alpha$ at $\varnothing$. This gives a $k$-spread probability measure whose support is contained in $\F\cup\{\varnothing\}$, since adding mass at $\varnothing$ does not change the containment probabilities of nonempty sets. Its support has VC dimension at most $d+1$, since adjoining $\varnothing$ can increase VC dimension by at most one.

Let $S$ be sampled from this probability measure. For every nonempty $T\subseteq U$, we have $\Pr[T\subseteq S]=z(T)$. Therefore Corollary~\ref{cor:independent-intersection} gives, for every $m\ge1$,
\begin{equation*}
\sum_{T\subseteq U,\,|T|=m}z(T)^2
\le B\left(\frac4k\right)^m.
\end{equation*}
Since every saturated $T$ of size $m$ contributes $k^{-2m}$ to the sum on the left, \eqref{eq:saturated-count} follows.
\end{proof}

The two lemmas above capture the structural information supplied by the maximum packing. We now combine them with Lemma~\ref{lem:bell} and Theorem~\ref{thm:spread} to prove Theorem~\ref{thm:main}.

\begin{proof}[Proof of Theorem~\ref{thm:main}]
If $p=1$, the assertion is immediate. Hence assume $p<1$. Put $a=2q(\F)$ and $k=(32a)^{-1}$, and let $C_0$ be a constant for Theorem~\ref{thm:spread}. Since $p<1$, we have
$Cq(\F)\log((d+1)/\varepsilon)<1$. As $\log((d+1)/\varepsilon)\ge\log4$, taking the absolute constant $C$ in the statement of the theorem sufficiently large ensures that $a\le1/256$ and hence $k\ge8$.

\paragraph{A quantitative obstruction to $a$-smallness.}
We claim that every cover of \(\F\) has \(a\)-cost at least one. This is immediate for a cover containing \(\varnothing\). By the definition of \(q(\F)\) and continuity, every other cover has \(q(\F)\)-cost at least \(1/2\). Since every member \(T\) of such a cover is nonempty, \(a^{|T|}=(2q(\F))^{|T|}\ge 2q(\F)^{|T|}\). Thus its \(a\)-cost is at least twice its \(q(\F)\)-cost, and hence at least one. This proves the claim. In particular, \(\F\) is not \(a\)-small. The stronger lower bound of one gives enough room to discard a collection of sets of small total \(a\)-cost while retaining a family that is not \(a\)-small. 

\paragraph{The maximum packing.}
Apply the linear program \eqref{eq:packing-lp} with this \(k\), and let \((z_S)_{S\in\F}\), \(\alpha\), and \(\mathcal T\) be as defined above. If $\alpha=1$, Lemma~\ref{lem:saturated-links} gives a $k$-spread probability measure supported on a subfamily of $\F$, and hence with VC dimension at most $d$. Theorem~\ref{thm:spread} then gives the desired conclusion for a binomial random set of density at most $(C_0/k)\log((d+1)/\varepsilon)=32C_0a\log((d+1)/\varepsilon)$. Thus, taking the constant $C$ in the theorem at least $64C_0$ proves the theorem when $\alpha=1$.

We may therefore assume $\alpha<1$. By Lemma~\ref{lem:saturated-links}, $\mathcal T$ is a cover of $\F$, and every $T\in\mathcal T$ is nonempty. Moreover, $\F_T$ supports a $k$-spread probability measure and satisfies $\VC(\F_T)\le d$, since every set shattered by $\F_T$ is also shattered by $\F$.

\paragraph{Truncating the saturated cover.}
Let $B=(C_{\mathrm{tr}}(d+1))^{d+1}$ and $t=\lceil\log_8(8B)\rceil$. Since $4ak=1/8$, Lemma~\ref{lem:saturated-truncation} gives
\begin{equation*}
\sum_{T\in\mathcal T,\,|T|>t}a^{|T|}
\le \sum_{m>t} B(4k)^m a^m
=\sum_{m>t}B8^{-m}
\le\frac{B8^{-t}}7
\le\frac1{56}.
\end{equation*}
In particular, $t=O(d\log(d+1))$.

Let $\G=\{T\in\mathcal T:|T|\le t\}$. Although $\G$ need not itself cover $\F$, every cover of $\G$ has $a$-cost greater than $1/2$. Otherwise, such a cover together with the discarded saturated sets would cover $\mathcal T$, and hence $\F$, with total $a$-cost at most $1/2+1/56<1$, contradicting the preceding lower bound. Hence \(\G\) is a nonempty \(t\)-bounded family that is not \(a\)-small. 

\paragraph{Two random exposures.}
Set $p_0=\min\{1,ca\log(2t/\varepsilon)\}$, where $c$ is the constant in Lemma~\ref{lem:bell}, and let $V\sim\Bin(U,p_0)$. Since $\G$ is $t$-bounded and not $a$-small, Lemma~\ref{lem:bell} shows that, with probability at least $1-\varepsilon/2$, the set $V$ contains some $T\in\G$. On this event, choose one such $T$ according to a fixed order on $\G$.

Independently sample $W\sim\Bin(U,p_1)$, where $p_1=\min\{1,(C_0/k)\log(2(d+1)/\varepsilon)\}$. Conditional on $V$ and the selected $T$, the set $W\setminus T$ is a binomial random subset of $U\setminus T$ of density $p_1$. Since $\F_T$ supports a $k$-spread probability measure and has VC dimension at most $d$, Theorem~\ref{thm:spread} implies that, except with probability at most $\varepsilon/2$, the set $W\setminus T$ contains a member of $\F_T$. Thus $W\setminus T$ contains $S\setminus T$ for some $S\in\F$ with $T\subseteq S$. Since $T\subseteq V$, we have $S\subseteq V\cup W$.

The two failure probabilities sum to at most $\varepsilon$. Finally, since $t=O(d\log(d+1))$ and $k^{-1}=32a$, there is an absolute constant $C_1$ such that
\begin{equation*}
p_0+p_1\le C_1a\log\frac{d+1}{\varepsilon}
=2C_1q(\F)\log\frac{d+1}{\varepsilon}.
\end{equation*}
The union $V\cup W$ has distribution $\Bin(U,1-(1-p_0)(1-p_1))$, whose density is at most $p_0+p_1$. Taking $C\ge2C_1$, together with the preceding constant requirements, this density is at most $p$. Since the event of containing a member of $\F$ is increasing, a random set from $\Bin(U,p)$ contains a member of $\F$ with probability at least $1-\varepsilon$. This completes the proof.
\end{proof}

\paragraph{The transversal example.}
For sharpness, we use a standard transversal construction from the literature; see, for example, \cite{ALWZ,BCW}. Partition $U$ into pairwise disjoint blocks $U_1,\ldots,U_d$, each of size $L\ge2$, and let
\begin{equation*}
\F=\bigl\{\{x_1,\ldots,x_d\}:x_i\in U_i\text{ for every }i=1,\ldots,d\bigr\}.
\end{equation*}
Thus $\F$ is the family of all transversals of the partition. The uniform probability measure on $\F$ is $L$-spread: every partial transversal $T$ is contained in an $L^{-|T|}$ fraction of the members of $\F$, while a set containing two points from the same block is contained in none. Moreover, $\VC(\F)=d$, since a set consisting of one point from each block is shattered, whereas every set of size $d+1$ contains two points from the same block and hence cannot be shattered.

A random set contains a member of $\F$ exactly when it meets every block, so for $W\sim\Bin(U,p)$ we have
\begin{equation}\label{eq:sharpness}
\Pr_W[\exists S\in\F:S\subseteq W]
=\bigl(1-(1-p)^L\bigr)^d.
\end{equation}
Thus, for success probability $1-\varepsilon$, the least possible $p$ is $1-\bigl(1-(1-\varepsilon)^{1/d}\bigr)^{1/L}$.
Since $1-(1-\varepsilon)^{1/d}=\Theta(\varepsilon/d)$ for $0<\varepsilon\le1/2$, and $1-e^{-x}=\Theta(x)$ for $0\le x\le1$, the least such $p$ is $\Theta(L^{-1}\log((d+1)/\varepsilon))$ whenever $L^{-1}\log((d+1)/\varepsilon)\le1$. Taking $k=L$, this shows that the dependence on $k$, $d$ and $\varepsilon$ in Theorem~\ref{thm:spread} is optimal up to absolute constants.

We next compute \(q(\F)\) and \(q_f(\F)\), showing that \(q(\F)=q_f(\F)=2^{-1/d}/L\). Using all $L^d$ members of $\F$ as an integral cover gives $\sum_{S\in\F}p^{|S|}=(Lp)^d$, and hence $q(\F)\ge2^{-1/d}/L$. For the matching upper bound on \(q_f(\F)\), let $\lambda:2^U\to[0,\infty)$ be an arbitrary fractional cover of $\F$, so that $\sum_{T\subseteq S}\lambda(T)\ge1$ for every $S\in\F$. We may assume that $\lambda(T)=0$ whenever $T$ is contained in no member of $\F$, since such weights do not contribute to any covering inequality. Thus every $T$ with $\lambda(T)>0$ is a partial transversal, and in particular $|T|\le d$.

Let $S$ be a uniformly random member of $\F$. For every $T$ with $\lambda(T)>0$, we have $\Pr_S[T\subseteq S]=L^{-|T|}$. Averaging the fractional-cover inequalities over $S$ therefore gives
\begin{equation*}
1\le \E_S\sum_{T\subseteq S}\lambda(T)
=\sum_{T\subseteq U}\lambda(T)\Pr_S[T\subseteq S]
=\sum_{T\subseteq U}\lambda(T)L^{-|T|}.
\end{equation*}
Since $0\le |T|\le d$ whenever $\lambda(T)>0$, we have $(Lp)^{|T|}\ge\min\{1,(Lp)^d\}$. Consequently,
\begin{equation*}
\sum_{T\subseteq U}\lambda(T)p^{|T|}
=\sum_{T\subseteq U}\lambda(T)L^{-|T|}(Lp)^{|T|}
\ge\min\{1,(Lp)^d\}\sum_{T\subseteq U}\lambda(T)L^{-|T|}
\ge\min\{1,(Lp)^d\}.
\end{equation*}
If $p>2^{-1/d}/L$, then the right-hand side is greater than $1/2$, so $q_f(\F)\le2^{-1/d}/L$. Together with $q(\F)\ge2^{-1/d}/L$ and $q(\F)\le q_f(\F)$, this yields $q(\F)=q_f(\F)=2^{-1/d}/L$. Together with the preceding estimate for the least density achieving success probability $1-\varepsilon$, this also shows that the high-probability form of Theorem~\ref{thm:main} is optimal up to absolute constants whenever $L^{-1}\log((d+1)/\varepsilon)\le1$.

Finally, take $L=d$ and let $d\to\infty$. Taking $\varepsilon=1/2$ in \eqref{eq:sharpness} gives $p_c(\F)=1-(1-2^{-1/d})^{1/d}$. Since $1-2^{-1/d}=(1+o(1))\log 2/d$, we have $\log(1/(1-2^{-1/d}))=\log d+O(1)$, and hence $p_c(\F)=(1+o(1))\log d/d$. On the other hand, $q(\F)=q_f(\F)=2^{-1/d}/d=(1+o(1))/d$. Therefore $p_c(\F)/q(\F)=(1+o(1))\log d$. This shows that the logarithmic dependence on $d$ in Theorem~\ref{thm:main} is necessary up to absolute constants. On the other hand, this example does not establish the necessity of the $\log(d+1)$ factor in Corollary~\ref{cor:integral-fractional}, since here $q(\F)=q_f(\F)$. Indeed, Talagrand's integral--fractional conjecture~\cite{Talagrand} predicts that this dependence can be removed altogether.

\section{Sunflowers}\label{sec:applications}

We call a nonempty family $\F\subseteq2^U$ \emph{$k$-spread} if the uniform probability measure on $\F$ is $k$-spread. To prove Corollary~\ref{cor:robust}, we use the maximal-core reduction from sunflower arguments; see, for example,~\cite{ALWZ}. It converts the size assumption into spreadness of a residual family, and Theorem~\ref{thm:spread} then gives the robust containment property.

\begin{proof}[Proof of Corollary~\ref{cor:robust}]
Let $C_{\mathrm{sp}}\ge1$ be a constant for which Theorem~\ref{thm:spread} holds. Set $k=C_{\mathrm{sp}}p^{-1}\log((d+1)/\varepsilon)$. We prove the assertion with $C=C_{\mathrm{sp}}$. Choose an inclusion-maximal set $T\subseteq U$ that is contained in at least $|\F|k^{-|T|}$ members of $\F$, and put
\begin{equation*}
\G=\{F\setminus T:F\in\F,\ T\subseteq F\}.
\end{equation*}
Such a set $T$ exists because $\varnothing$ is contained in all $|\F|$ members of $\F$. By the assumed bound, $|\F|>k^n$, and hence $|\F|k^{-n}>1$. Therefore $|T|<n$, since an $n$-set is contained in at most one member of the $n$-uniform family $\F$. Thus every member of $\G$ is nonempty.

For every nonempty $A\subseteq U\setminus T$, the maximality of $T$ gives
\begin{equation*}
|\{G\in\G:A\subseteq G\}|<|\F|k^{-|T|-|A|}\le|\G|k^{-|A|}.
\end{equation*}
Thus $\G$ is $k$-spread. Moreover, every set shattered by $\G$ is also shattered by $\F$, so $\VC(\G)\le d$.

Since $k>1$, no point belongs to every member of $\G$, and hence $\bigcap_{G\in\G}G=\varnothing$. As every member of $\G$ is nonempty, $\G$ has at least two members. By the definition of $k$, we have $p=(C_{\mathrm{sp}}/k)\log((d+1)/\varepsilon)$. Therefore Theorem~\ref{thm:spread}, applied on the ground set $U\setminus T$, shows that a random set $W\sim\Bin(U\setminus T,p)$ contains a member of $\G$ with probability at least $1-\varepsilon$. It follows that $\mathcal R=\{F\in\F:T\subseteq F\}$ has common intersection exactly $T$. Since every member of $\G$ is nonempty, we also have $T\notin\mathcal R$. Hence $\mathcal R$ is a $(p,\varepsilon)$-robust sunflower with core $T$.
\end{proof}

A random-coloring argument converts a robust sunflower into an ordinary sunflower. 

\begin{corollary}\label{cor:sunflower}
There is an absolute constant $C>0$ such that, for integers $d,n\ge1$ and $r\ge2$, every $n$-uniform family $\F$ with $\VC(\F)\le d$ and $|\F|>(Cr\log(d+1))^n$ contains an $r$-sunflower.
\end{corollary}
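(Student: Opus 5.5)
We derive this from Corollary~\ref{cor:robust} with $p=1/(2r)$ and $\varepsilon=1/2$, followed by a random colouring of the petals. Taking $C$ to be $2\log 4/\log 2$ times the constant $C_{\mathrm{rob}}$ of Corollary~\ref{cor:robust}, we have $\log((d+1)/\varepsilon)=\log(2(d+1))\le (\log 4/\log 2)\log(d+1)$ for $d\ge1$. Hence $|\F|>(Cr\log(d+1))^n$ implies $|\F|>(C_{\mathrm{rob}}\,p^{-1}\log((d+1)/\varepsilon))^n$. Corollary~\ref{cor:robust} then yields a $(1/(2r),1/2)$-robust sunflower $\mathcal R\subseteq\F$ with core $T\notin\mathcal R$. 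Every petal $R\setminus T$ is nonempty, and the members of $\mathcal R$ are distinct.

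\textbf{Random colouring.} Colour each point of $U\setminus T$ independently and uniformly with one of $r$ colours, so each colour class $W_i$ is distributed as $\Bin(U\setminus T,1/r)$. A $\Bin(U\setminus T,1/r)$ set stochastically dominates $\Bin(U\setminus T,1/(2r))$, and containing some petal is an increasing event. So by robustness each $W_i$ contains some petal $R_i\setminus T$ with probability at least $1/2$. We need all $r$ colour classes to succeed simultaneously, and a union bound over $i$ is too weak.

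\textbf{Main obstacle.} The difficulty is exactly this simultaneous success. The standard fix, as in~\cite{ALWZ}, is to apply the robust sunflower lemma with $\varepsilon<1/r$ rather than $\varepsilon=1/2$. That choice costs a factor $\log((d+1)r)$ in the base, which exceeds the $r\log(d+1)$ we are allowed, since $r\log r$ can be much larger than $r\log(d+1)$ when $d$ is small. I would therefore avoid a union bound over colours and use the following iterative greedy argument instead, taking $p$ of order $1/r$ in Corollary~\ref{cor:robust}.

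\textbf{Iterative greedy argument.} Inside the maximal-core proof of Corollary~\ref{cor:robust}, the residual family $\G$ is $k$-spread with $k$ of order $r\log(d+1)$. Pick petals one at a time. After choosing disjoint petals $P_1,\dots,P_j$ with $j<r$, their union $X$ has $|X|\le jn$. Remove from $\G$ all members meeting $X$. The members meeting $X$ carry measure at most $|X|/k\le rn/k$ by spreadness, and this is too large when $n$ is big, so this naive removal fails. Instead, split $U\setminus T$ into $r$ random parts $V_1,\dots,V_r$. Apply Theorem~\ref{thm:spread} (through Corollary~\ref{cor:robust}, with $p=1/r$ and $\varepsilon$ a small constant) to each part. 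The expected number of parts that fail to contain a petal is at most $\varepsilon r$, so with positive probability at least $(1-\varepsilon)r$ parts succeed. This gives roughly $r/2$ disjoint petals, and we rerun with $2r$ in place of $r$, absorbing the factor $2$ into $C$. Since the parts are disjoint, the chosen petals are pairwise disjoint, and together with the common core $T$ the corresponding members of $\F$ form an $r$-sunflower.
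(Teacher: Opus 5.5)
Your final paragraph is the paper's argument, and it is correct. You apply Corollary~\ref{cor:robust} with $p=1/(2r)$ and $\varepsilon=1/2$ (your constant bookkeeping in the first paragraph is right), colour $U\setminus T$ with $2r$ colours so that each class is $\Bin(U\setminus T,1/(2r))$, and use linearity of expectation to find a colouring in which at least $r$ classes each contain a petal; this yields $r$ pairwise disjoint nonempty petals and hence an $r$-sunflower with core $T$. The earlier claim that all colour classes must succeed simultaneously, the discussion of taking $\varepsilon<1/r$, and the greedy-removal detour are unnecessary and should be deleted, because using $2r$ classes and needing only $r$ successes avoids the union bound entirely.
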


\begin{proof}
Apply Corollary~\ref{cor:robust} with $p=1/(2r)$ and $\varepsilon=1/2$, enlarging the absolute constant $C$ if necessary, to obtain a $(1/(2r),1/2)$-robust sunflower $\mathcal R$ with core $T$. Color the points of $U\setminus T$ independently and uniformly with $2r$ colors. Each color class contains a petal with probability at least $1/2$, so the expected number of successful classes is at least $r$. Hence some coloring has at least $r$ successful classes, and choosing one petal from each of them gives $r$ pairwise disjoint petals. Adding $T$ yields an $r$-sunflower in $\F$.
\end{proof}

A \emph{matching} is a family of pairwise disjoint sets. Theorem~\ref{thm:spread} also immediately implies the corresponding matching statement by the same random-coloring argument: there is an absolute constant \(C>0\) such that, for integers \(d,r\ge1\), every nonempty \(k\)-spread family \(\F\subseteq2^U\setminus\{\varnothing\}\) with \(\VC(\F)\le d\) and \(k\ge Cr\log(d+1)\) contains \(r\) pairwise disjoint members.

\section*{Acknowledgements}

The author is supported by the National Natural Science Foundation
of China under Grant Nos.~12571352 and 12231014, and the Fundamental
Research Funds for the Central Universities.

During the preparation of this manuscript, the author used ChatGPT6 Pro and ChatGPT5.6 Sol as auxiliary tools for exploratory discussion, checking calculations, and improving the exposition. All mathematical statements, arguments, proofs, and references were independently verified by the author, who assumes full responsibility for the content of the manuscript.

\end{document}